\numberwithin{equation}{section}
\newcommand{\modules}[1]{#1\text{-}\mathbf{mod}}
\newcommand{\comodules}[1]{\mathbf{comod}\text{-}#1}
\newcommand{\modulesc}[1]{#1\text{-}\mathbf{modc}}
\newtheorem{theorem}{Theorem}[section]
\newtheorem{proposition}[theorem]{Proposition}
\newtheorem{lemma}[theorem]{Lemma}
\newtheorem{corollary}[theorem]{Corollary}
\newtheorem{question}[theorem]{Question}
\theoremstyle{definition}
\newtheorem{remark}[theorem]{Remark}
\newtheorem{example}[theorem]{Example}
\begin{document}

\title[Monodromy of holomorphic differential systems]{On the monodromy of holomorphic differential systems}

\author[I. Biswas]{Indranil Biswas}

\address{Department of Mathematics, Shiv Nadar University,
NH91, Tehsil Dadri, Greater Noida,
Uttar Pradesh 201314, India}

\email{indranil.biswas@snu.edu.in, indranil29@gmail.com}

\author[S. Dumitrescu]{Sorin Dumitrescu}

\address{Universit\'e C\^ote d'Azur\\ CNRS, LJAD, France}

\email{dumitres@unice.fr}

\author[L. Heller]{Lynn Heller}

\address{Beijing Institute of Mathematical Sciences
and Applications, Yanqi Island,
Huairou District, Beijing 101408, P. R. China}

\email{lynn@bimsa.cn}

\author[S. Heller]{Sebastian Heller}

\address{Beijing Institute of Mathematical Sciences
and Applications, Yanqi Island,
Huairou District, Beijing 101408, P. R. China}

\email{sheller@bimsa.cn}

\author[J. P. D. Santos]{Jo\~ao Pedro dos Santos}

\address{Institut Montp\'ellierain Alexander Grothendieck, Universit\'e de Montpellier,
Place Eug\`ene Bataillon, 34090 Montpellier, France}

\email{joao\_pedro.dos\_santos@yahoo.com}

\keywords{Differential system; holomorphic connection; neutral Tannakian category; iterated
integral.}

\subjclass[2010]{32S40, 53B15, 34M03, 14H15}

\begin{abstract}
First we survey and explain the strategy of some 
recent results that construct holomorphic $\text{sl}(2, \mathbb C)$--differential systems over 
some Riemann surfaces $\Sigma_g$ of genus $g \,\geq\, 2$, satisfying the
condition that the image of the associated 
monodromy homomorphism is (real) Fuchsian \cite{BDHH} or some cocompact Kleinian subgroup 
$$\Gamma \, \subset \, \text{SL}(2, \mathbb C)$$ as in \cite{BDHH2}. As a consequence, 
there exist holomorphic maps from $\Sigma_g$ to the quotient space $\text{SL}(2, \mathbb C)/ 
\Gamma$, where $\Gamma\, \subset\, \text{SL}(2, \mathbb C)$ is a cocompact lattice,
that do not factor through any elliptic curve \cite{BDHH2}. This answers positively 
a question of Ghys in \cite{Gh}; the question was also raised by Huckleberry and Winkelmann
in \cite{HW}.

Then we prove that when $M$ is a Riemann surface, a Torelli type theorem holds for the affine group scheme over $\mathbb C$ obtained from the category of holomorphic connections on {\it \'etale trivial} holomorphic bundles.

After that, we explain how to compute in a simple way the holonomy of a holomorphic connection 
on a free vector bundle.
 
Finally, for a compact K\"ahler manifold $M$, we investigate the neutral Tannakian category given by 
the holomorphic connections on \'etale trivial holomorphic bundles over $M$. If $\varpi$ 
(respectively, $\Theta$) stands for the affine group scheme over $\mathbb C$ obtained from the 
category of connections (respectively, connections on free (trivial) vector bundles), then the natural 
inclusion produces a morphism $v\,:\,{\mathcal O}(\Theta)\,\longrightarrow\,
{\mathcal O}(\varpi)$ of Hopf algebras. We present a description of the transpose of $v$ in
terms of the iterated integrals.
\end{abstract}

\maketitle

\section{Introduction}

In recent times, there has been a resurgent interest in the theory of holomorphic {\it 
differential systems}, or connections on trivial bundles, in the special case of a {\it 
compact} ambient space. This interest has been mainly triggered by a strategy of Ghys, 
presented in \cite{CDHL}, to answer a question raised first by Huckleberry and Winkelmann in 
\cite{HW} and later by Ghys in \cite{Gh}. This question in \cite{HM, Gh} inquires about the 
existence of holomorphic maps from compact hyperbolic Riemann surfaces to compact complex 
manifolds of the form $\mathrm{SL}(2,\mathbb C)/\Gamma$, with $\Gamma$ a lattice in 
$\mathrm{SL}(2,\mathbb C)$ (which does not factor to any elliptic curve). The suggested 
direction in \cite{CDHL} towards an answer is the study of representations of surface groups 
into $\mathrm{SL}(2,\mathbb C)$ giving rise to rank two {\it free} (trivial) holomorphic 
vector bundles over some Riemann surface of genus $g \,\geq\, 2$ (see Question 
\ref{12.10.2022--2}).

Although the theory of differential systems (holomorphic connections on free vector bundles) 
has always been a much studied subject, the restrictions imposed by a compact ambient space 
are rather pleasing and produce fruitful geometric situations. The present paper is a 
contribution to this theme, containing an extended survey presenting different backgrounds and recent advances.

To motivate the reader and display some of the beautiful geometry we are set to explain,
we begin by surveying some recent results on the theory of holomorphic $\text{SL}(2, \mathbb C)$--differential systems over compact Riemann surfaces and their monodromy representations.
This is done in Section \ref{Sect diff systems}, 
where after reviewing Ghys' questions and its relation to the theory of differential systems (see Question \ref{12.10.2022--1} and Question \ref{12.10.2022--2}), we move on to explain some local results about the
Riemann-Hilbert mapping, proved in \cite{CDHL, BD2}. (It is perhaps worth noting that the main results of these works were recently extended to the logarithmic case in \cite{ABDH}.) 

Despite the local results obtained in \cite{CDHL}, for curves of genus $g\,=\,2$, and then in 
\cite{BD2, ABDH}, Ghys' question was still open until very recently. In particular, it was not 
known whether one can realize given real, discrete or Zariski dense subgroups in $\text{SL}(2, 
\mathbb{C})$ as monodromy of $\text{SL}(2, \mathbb{C})$--local systems over some Riemann 
surface $\Sigma_g$ of genus $g \geq 2$.

An important step toward the understanding of the image of the monodromy homomorphism was very 
recently realized in \cite{BDHH} where holomorphic connections with (real) Fuchsian monodromy 
were constructed. This completely answers a question asked in \cite{CDHL}.
 
Eventually Ghys strategy has been successfully implemented in the very recent work \cite{BDHH2}. It is first shown in \cite{BDHH2} that every irreducible $\text{SL}(2, \mathbb 
R)$--representation with sufficient many symmetries can be realized as the monodromy of a 
holomorphic irreducible $\text{SL}(2, \mathbb C)$--connection on the rank two trivial 
holomorphic bundle over some (very symmetric) Riemann surface $\Sigma$ such that 
$\text{genus}(\Sigma)\, \geq\, 2$. Then an example of such a symmetric representation 
contained in a cocompact lattice $\Gamma$ in $\text{SL}(2, \mathbb C)$ is given and a 
holomorphic map $$\Sigma \, \longrightarrow \text{SL}(2, \mathbb C)/ \Gamma$$ is constructed 
(which does not descend on any elliptic curve), answering the open question of Ghys \cite{Gh} 
and Huckleberry-Winkelmann \cite{HW}.

Once light was thrown on the pertinence of the theory of differential systems on compact 
spaces, it became clear, see for example \cite{BDDH}, that a reasonable way to study 
holomorphic connections is to impose conditions on the underlying vector bundle. It is with 
this idea in mind that we go over to Section \ref{Sect finite}. There, we give ourselves a 
compact complex manifold $M$ and study integrable connections where the underlying vector 
bundle enjoys the following property: it becomes holomorphically trivial after an \'etale 
covering or, it is {\it \'etale trivial}. (We explain at the start of Section \ref{Sect 
finite} that the latter property can be recovered from the existence of a connection with 
finite monodromy.) We start by reviewing recent work \cite{Bi,BD2} which connects \'etale 
trivial bundles with {\it finite} \cite[p. 35]{No1} and {\it ``properly'' trivial} ones 
\cite[Property (T), p. 225]{BdS}. Then, we follow the path opened by Tannakian categories and 
define an affine group scheme whose representations amount to the aforementioned connections.

Another of the pleasant properties of differential systems (in compact ambient) is the 
simplicity in which differential Galois groups can be computed, as was the theme in 
\cite{biswas-hai-dos_santos22}. (This paper takes the matter from the purely algebro-geometric 
viewpoint.) Section \ref{Sect Trivial bundle} deals with an analogue of 
\cite{biswas-hai-dos_santos22} in a complex analytic setting by showing that the smallest 
reduction of structure group preserving a connection is given by the algebraic hull of the 
``connection matrices''.

In Section \ref{Sect iterated}, we study the relation between the category of integrable 
connections and the category of integrable connections on trivial vector bundles by the 
Tannakian point of view, as in \cite{BDDH}. Moreover, this study is done by putting together 
Hochschild-Mostow's \cite[p.1140]{hochschild-mostow57} (as understood by \cite{sweedler69}) 
and Saavedra-Grothendieck's \cite{Sa} approach to Tannaka duality. Recall that the first one 
is achieved by means of ``representative functions'' while the second is, from the start, 
categorical. In explicit terms: Let $M$ be a compact complex manifold, $x_0$ a point in $M$, 
$\mathcal C_{\rm dR}(M)$ and $\mathcal T(M)$ respectively the categories of integrable 
connections and integrable connections on trivial vector bundles. As is well-known, $\mathcal 
C_{\rm dR}(M)$ is equivalent to the category of $\mathbb C\pi_1(M,\,x_0)$--modules while, as 
argued in \cite{biswas-hai-dos_santos22}, $\mathcal T(M)$ is equivalent to the category of 
representations of a certain co-commutative Hopf algebra \[\mathfrak 
A_M\,=\,\frac{\text{Tensor algebra on $H^0(M,\, \Omega_M^1)^*$}}{\text{a certain ideal}}.\] 
Following \cite[Chapter VI]{sweedler69}, these categories are then, respectively, equivalent 
to the categories of comodules over the Hopf-algebras $\mathbb C\pi_1(M,\,x_0)^\circ$ and 
$\mathfrak A_M^\circ$. Hence, the morphism $\mathfrak A_M^\circ\,\longrightarrow\, \mathbb 
C\pi_1(M,\,x_0)^\circ$ coming from the inclusion $\mathcal T(M)\,\longrightarrow\,\mathcal C_{\rm dR}(M)$ through \cite{Sa} gives rise to an arrow 
between certain completions:
\[u:
\mathbb C\pi_1(M,\,x_0)\,\widehat{\,\,}\,\, \longrightarrow\,\,\,
\widehat{\mathfrak A}_M;
\]
it is rendering explicit the composition $\mathbb C\pi_1(M,\,x_0)\,\longrightarrow\, \mathbb 
C\pi_1(M,\,x_0)\,\widehat{\,\,} \longrightarrow\, \widehat{\mathfrak A}_M$ in terms of 
iterated integrals that occupies Section \ref{Sect iterated}. In doing so, we recover 
expressions appearing in the works of Chen, Parshin and Hain but, in our context, the source 
and target of the map $u$ are bigger completions and carry more information. To wit, these are 
the pro-finite {\it dimensional} (as opposed to pro-{\it finite}) completions.

This work is dedicated to Oscar Garc\'{\i}a-Prada on the occasion of his sixtieth birthday.

\section{Curves in compact quotients of $\rm{SL}(2, \mathbb C)$}\label{Sect diff systems} 

A natural class of compact complex manifolds generalizing complex tori are complex manifolds 
whose holomorphic tangent bundle is holomorphically trivial. By a classical result of Wang 
\cite{Wa}, compact complex manifolds with holomorphically trivial tangent bundle are 
biholomorphic to the complex manifolds of the form $G/\Gamma$, where $G$ is a complex Lie 
group and $\Gamma\, \subset\, G$ a discrete cocompact subgroup. Compact complex manifolds 
in this class are called {\it (complex) parallelizable manifolds}. They are in general not 
K\"ahler. More precisely, a complex parallelizable manifold $G/\Gamma$ is K\"ahler if and only 
if the complex Lie group $G$ is abelian, in which case $G/\Gamma$ is biholomorphic to a 
compact compact complex torus \cite{Wa}.

In particular, when $G$ is a complex semi-simple Lie group, and $\Gamma\, \subset\, G$ is a cocompact lattice, the complex 
parallelizable manifold $G/ \Gamma$ is non-K\"ahler. Moreover, a theorem due to Huckleberry and Margulis \cite{HM} says that $G / 
\Gamma$ does not admit any complex analytic hypersurface. This implies, in particular, that $G / \Gamma$ does not admit any 
nonconstant meromorphic function and, consequently, the algebraic dimension of the complex parallelizable manifold $G/\Gamma$ is zero.

When $G$ is a complex semi-simple Lie group with no local factor isomorphic to $\text{SL}(2, \mathbb C)$ (for example, $G=\text{SL}(n, 
\mathbb C)$, with $n\,\geq\, 3$), a vanishing result of Raghunathan, \cite{Ra}, says that the complex structure of any compact quotient 
$G/ \Gamma$, where $\Gamma$ a discrete cocompact subgroup in $G$, is {\it rigid}: any complex structure on the underlying real manifold $G/ 
\Gamma$ close to the standard one (endowed by the complex structure of $G$) actually coincides with the standard one.

The remaining question about the rigidity (or flexibility) of the standard complex structure on the compact quotients $G/ \Gamma$, 
with $G\,=\,\text{SL}(2, \mathbb C)$, was studied by Ghys in \cite{Gh} (see also \cite{Raj}). In this situation Ghys computes in \cite{Gh} 
the Kuranishi space of $\text{SL}(2, \mathbb C) / \Gamma$ and proves that this deformation space is nontrivial for any discrete 
cocompact subgroup $\Gamma \, \subset \, \text{SL}(2, \mathbb C)$ with positive first Betti number. This implies that the standard 
complex structure of these compact quotients of $\text{SL}(2, \mathbb C)$ is flexible and admit nontrivial deformations
described by Ghys.

It may be mentioned that the discrete cocompact subgroups $\Gamma \, \subset \, \text{SL}(2, \mathbb C)$ with positive first Betti 
number are quite abundant (see \cite{La}); their existence is closely related to the compact quotients of the $3$--hyperbolic space 
${\mathbb H}^3$ (i.e. the compact hyperbolic $3$--manifolds). More precisely, the complex Lie group $\text{PSL}(2, \mathbb{C})$ being the 
orientation preserving isometry group of the 3-hyperbolic space ${\mathbb H}^3$, it is identified with the oriented orthonormal frame 
bundle of ${\mathbb H}^3$. Consequently, the orthonormal frame bundle of any compact hyperbolic $3$--manifold $M$ (diffeomorphic to a 
quotient ${\mathbb H}^3/ \Gamma$, with $\Gamma$ a lattice in $\text{SL}(2, \mathbb{C})$) admits the structure of a compact complex 
parallelizable manifold which is biholomorphic to a quotient of $\text{SL}(2, \mathbb{C})$ by its cocompact lattice $\Gamma$ 
(isomorphic to the fundamental group of $M$). One of the reasons for the complex geometry of the compact quotients of $\text{SL}(2, 
\mathbb{C})$ to be of interest is this fact.

As mentioned before, the flexibility of the complex structure of $\text{SL}(2, \mathbb C)/ 
\Gamma$ was discovered by Ghys in \cite{Gh} (see also \cite{Raj}) where he showed that the 
corresponding Kuranishi space has positive dimension for all $\Gamma$ with positive first 
Betti number. Notice that compact hyperbolic $3$--manifolds with prescribed rational 
cohomology ring (in particular, with arbitrarily large first Betti number) can be constructed 
using Thurston's hyperbolisation Theorem (see \cite[Lemme 6.2]{Gh}). It should be mentioned 
that the complex geometry of the orthonormal frame bundle of the compact hyperbolic 
$3$--manifold ${\mathbb H}^3/ \Gamma$ endowed with the complex structure $\text{SL}(2, \mathbb 
C)/ \Gamma$ is not well understood yet.

When studying the geometry of compact quotients of $\text{SL}(2, \mathbb C)$, Ghys asked in \cite{Gh} the following question, also raised 
by Huckleberry and Winkelmann in \cite{HW}:

\begin{question}\label{12.10.2022--1}
 Does there exist a compact quotient ${\rm SL}(2, \mathbb C)/ \Gamma$ 
admitting a compact holomorphic curve of genus $g \,\geq\, 2$?
\end{question}

A negative answer to this question would generalize the Huckleberry-Margulis Theorem \cite{HM} 
(for $G\,=\,\text{SL}(2, \mathbb C)$) to compact holomorphic curves of genus $g \geq 2$. 
Notice that, in the case $g\,=\,1$, elliptic curves covered by one-parameter subgroups in 
$\text{SL}(2, \mathbb C)$ are known to exist in certain quotients $\text{SL}(2, \mathbb C)/ 
\Gamma$.

In an attempt to answer positively Question \ref{12.10.2022--1}, Ghys developed the following 
strategy to characterize compact holomorphic curves of genus $g\, \geq\, 2$ in quotients 
$\text{SL}(2, \mathbb C)/ \Gamma$ (see \cite{CDHL} where this strategy is explained in 
detail).

{\it Find a Riemann surface $\Sigma$ of genus $g \geq 2$ and an irreducible holomorphic ${\rm 
SL}(2, \mathbb C)$--connection on the rank two trivial holomorphic bundle $\Sigma \times 
\mathbb{C}^2$ over $\Sigma$ such that the image of the corresponding monodromy homomorphism 
lies in a cocompact lattice $\Gamma$ in ${\rm SL}(2, \mathbb C)$.}

Recall that a holomorphic connection on a holomorphic bundle over a Riemann surface is 
automatically flat.

As Ghys observed, in this context, the parallel frame of the above (flat) holomorphic 
connection gives rise to a holomorphic map from $\Sigma$ into the quotient $\text{SL}(2, 
\mathbb C)/ \Gamma$ (which, due to the irreducibility of the connection, does not factor 
through any elliptic curve) (see \cite{CDHL} for more details). More generally, Ghys asked:

\begin{question}\label{12.10.2022--2} Given a compact orientable surface $X$ of genus $g \,\geq \,2$, describe the space of all
irreducible group homomorphisms $\rho \,:\, \pi_1(X)\,
\longrightarrow\,{\rm SL}(2,\mathbb{C})$ for which there exists a complex structure $\Sigma$ on
$X$ such that the holomorphic vector bundle over $\Sigma$ defined by $\rho$ is holomorphically trivial.
\end{question}

If some irreducible homomorphism $\rho \,:\, \pi_1(X)\, \longrightarrow\,{\rm 
SL}(2,\mathbb{C})$ satisfying the above condition has the property that $\rho(\pi_1(X))$ 
 is contained in a cocompact lattice $\Gamma \,\subset\, \text{SL}(2, \mathbb{C})$, then 
the local system on the Riemann surface $\Sigma$ given by $\rho$ produces a nontrivial 
holomorphic map from $\Sigma$ to $\text{SL}(2, \mathbb{C}) / \Gamma$ (more details about this 
method can be found in \cite{CDHL}).

A first local result in the direction of Question \ref{12.10.2022--2} was obtained in 
\cite{CDHL} for the genus $g=2$. Other local results generalizing the one in \cite{CDHL} were 
obtained in \cite{BD2}, and then in \cite{ABDH} for logarithmic differential systems. We 
briefly recall below those local results obtained in \cite{CDHL,BD2}.

The aim of both \cite{CDHL, BD2} is to study the Riemann-Hilbert correspondence for 
differential systems over compact Riemann surfaces of genus $g \,\geq\, 2$. Denote by $X$ a 
compact connected oriented topological surface of genus $g \,\geq\, 2$. We also denote by $G$ 
a connected reductive affine algebraic group defined over the field of complex numbers.

Let us now fix a complex structure $\Sigma$ on $X$ (equivalently, it is the choice of an 
element in the Teichm\"uller space of $X$). We consider also a holomorphic connection $\nabla$ 
on the trivial holomorphic principal $G$--bundle $\Sigma \times G$ over $\Sigma$. Notice that 
$\nabla$ is determined by an element $\delta\, \in\, {\mathfrak g}\otimes H^0(\Sigma, \, 
K_\Sigma ),$ where $\mathfrak g$ is the complex Lie algebra of $G$ and $K_\Sigma $ is the 
canonical bundle of $\Sigma $. Fix a base point $x_0\, \in\, X$ and consider the corresponding 
universal cover $\pi \,:\, \widetilde{\Sigma} \,\longrightarrow\, \Sigma$ of $\Sigma$ 
(endowed with the induced complex structure). On the trivial principal $G$--bundle 
$\widetilde{\Sigma}\times G$ over $\widetilde \Sigma$ we have the pulled back holomorphic 
(flat) connection $\pi^*\nabla$.

For any locally defined $\nabla$--parallel section $\phi$ of $\Sigma \times G$, the pulled 
back local section $\pi^*\phi$ of $\widetilde{\Sigma}\times G$ extends to a 
$\pi^*\nabla$--parallel section on entire $\widetilde \Sigma$. The above extension of 
$\pi^*\phi$ identifies with a holomorphic map $\widetilde{\Sigma} \,\longrightarrow\, G$ which 
is $\pi_1(X ,\, x_0)$--equivariant with respect to the natural action of $\pi_1(X,\, x_0)$ on 
$\widetilde \Sigma$ through deck transformations and the action of $\pi_1(X,\, x_0)$ on $G$ 
through a group homomorphism $\pi_1(X,\, x_0) \,\longrightarrow\, G$, which is known as the 
{\it monodromy } (homomorphism) of the flat connection $\nabla$.

The above monodromy homomorphism depends on the choice of the holomorphic trivialization of 
the principal $G$--bundle over $\widetilde{\Sigma}$. Nevertheless, the corresponding element 
of the character variety of $G$--representations $$\Xi \,:=\, \text{Hom}(\pi_1(X),\, 
G)/\!\!/G$$ does not depend either on the choice of the trivialization of the principal 
$G$--bundle over $\widetilde{\Sigma}$, or on the choice of the base point $x_0$ (or on the 
choice of the local section $\phi$).

The character variety $\Xi$ is known to be a singular complex analytic space of complex 
dimension $2((g-1)\cdot\dim\, [G,\, G] + g\cdot (\dim G - \dim\, [G,\, G]))$; see, for 
example, \cite[Proposition 49]{Sik}. The dimension of $\Xi$ equals $2(g-1)d+2gc$, with $d$ 
being the complex dimension of the commutator group $ [G,\, G]$ for $G$ and $c$ being equal to 
$\dim G - \dim\, [G,\, G]$.

As in \cite{CDHL,BD2}, denote by $\text{Syst}$ the space of all differential systems, meaning 
the space of the pairs $(\Sigma,\, \nabla)$, where $\Sigma$ is a complex structure on $X$ (an 
element of the Teichm\"uller space for $X$) and $\nabla$ is a holomorphic connection on the 
trivial holomorphic principal $G$--bundle $\Sigma \times G$ over $\Sigma$. This space of 
differential systems on $X$ is a singular complex analytic space of dimension $(g-1)(d+3)+gc$. 
Associating to any pair $(\Sigma,\, \nabla) \,\in\, \text{Syst}$ the element in the character 
variety $\Xi$ corresponding to the monodromy morphism of $\nabla$, we obtain a holomorphic map 
from $\text{Syst}$ to the character variety $\Xi$. This map is the restriction to 
$\text{Syst}$ of the Riemann--Hilbert correspondence.

We consider now the nontrivial Zariski open subset $\text{Syst}^{\text{irred}}$ in 
$\text{Syst}$ defined by all pairs $(\Sigma,\, \nabla)$ such that the holomorphic connection 
$\nabla$ is {\it irreducible} (i.e., the monodromy homomorphism associated to $\nabla$ does 
not factor through any proper parabolic subgroup of $G$). The Zariski open set 
$\text{Syst}^{\text{irred}}$ is a smooth complex manifold (see, for example, Corollary 50 in 
\cite{Sik}). The image of $\text{Syst}^{\rm irred}$ under the Riemann--Hilbert correspondence 
described above lies in the smooth Zariski orbifold open subset $$\Xi^{\rm irred}\, \subset\, 
\Xi$$ defined by the irreducible homomorphisms $\pi_1(X)\, \longrightarrow\, G$ (i.e., 
homomorphisms that do not factor through some proper parabolic subgroup of $G$).

Denote by $\text{Mon}$ (as monodromy) this restriction of the Riemann--Hilbert correspondence to $\text{Syst}^{\text{irred}}$:
\begin{equation}\label{mm}
\text{Mon} \,:\, \text{Syst}^{\text{irred}} \,\longrightarrow\, \Xi^{\rm irred}
\end{equation}

Then $\text{Mon}$ is a holomorphic map between the complex manifolds 
$\text{Syst}^{\text{irred}}$ and $\Xi^{\rm irred}$.

The main result proved in \cite{BD2} is the following:

\begin{theorem}[\cite{BD2}]\label{thi1}
If the complex dimension of $G$ is at least three, the map ${\rm Mon}$
in \eqref{mm} is an immersion at the generic point.
\end{theorem}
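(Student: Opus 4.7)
To prove Theorem \ref{thi1}, the plan is to embed $\mathrm{Syst}^{\mathrm{irred}}$ in a larger relative moduli space where Riemann--Hilbert is a biholomorphism, reducing the immersion question to a single linear--algebraic condition, and then to verify this condition at one well--chosen point. Concretely, let $\mathcal M\,\longrightarrow\, \mathcal T$ be the relative moduli space over Teichm\"uller space whose fiber over $\Sigma\,\in\,\mathcal T$ is the moduli of irreducible holomorphic connections on \emph{arbitrary} principal $G$--bundles over $\Sigma$. On each fiber, Riemann--Hilbert gives a biholomorphism to $\Xi^{\mathrm{irred}}$, so the induced global map $\mathcal M\,\longrightarrow\,\Xi^{\mathrm{irred}}$ is a submersion whose relative critical directions are tangent to the isomonodromic foliation, a foliation whose leaves project isomorphically onto $\mathcal T$. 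Since $\mathrm{Syst}^{\mathrm{irred}}$ sits in $\mathcal M$ as the locus where the underlying principal $G$--bundle is holomorphically trivial, and $\mathrm{Mon}$ is the restriction of the above map, the kernel of $d\mathrm{Mon}$ at $(\Sigma,\,\nabla)$ identifies with the subspace of Kodaira--Spencer classes $\mu\,\in\, H^1(\Sigma,\,T\Sigma)$ whose isomonodromic deformation preserves the triviality of the bundle at first order.

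The obstruction to this triviality is the Kodaira--Spencer derivative of the holomorphic bundle in the isomonodromic family associated to the monodromy $\rho$, which takes the explicit form of a contraction
\[
\Psi\,:\,H^1(\Sigma,\,T\Sigma)\,\longrightarrow\,\mathfrak g\otimes H^1(\Sigma,\,\mathcal O_\Sigma),\qquad \mu\,\longmapsto\,\mu\,\lrcorner\, A,
\]
where $A\,\in\,\mathfrak g\otimes H^0(\Sigma,\,K_\Sigma)$ is the connection $(1,0)$--form of $\nabla$ on the trivial bundle. Hence $\ker d\mathrm{Mon}\,\cong\,\ker\Psi$. Since $\mathrm{Syst}^{\mathrm{irred}}$ is a smooth connected complex manifold, upper semi--continuity of the corank of $d\mathrm{Mon}$ reduces the theorem to exhibiting one point $(\Sigma,\,A)\,\in\,\mathrm{Syst}^{\mathrm{irred}}$ at which $\Psi$ is injective; then $\mathrm{Mon}$ is an immersion on a Zariski--open subset.

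This last step, where the hypothesis $\dim G\,\geq\, 3$ is decisive, is the main obstacle. Serre duality identifies the transpose of $\Psi$ with the $\mathfrak g^\ast$--linear extension of the multiplication map
\[
\Psi^\vee\,:\,\mathfrak g^\ast\otimes H^0(\Sigma,\,K_\Sigma)\,\longrightarrow\, H^0(\Sigma,\,K_\Sigma^{\otimes 2}),\qquad \xi\otimes\omega\,\longmapsto\, \omega\cdot\Big(\sum_i \xi(X_i)\,\omega_i\Big),
\]
for $A\,=\,\sum_i X_i\otimes\omega_i$, so that injectivity of $\Psi$ is equivalent to the equality $W\cdot H^0(K_\Sigma)\,=\,H^0(K_\Sigma^{\otimes 2})$, where $W\,:=\,\mathrm{span}(\omega_i)\,\subseteq\, H^0(K_\Sigma)$. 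The plan is to choose $\Sigma$ non--hyperelliptic of genus $g\,\geq\, 3$ together with generic $X_i\,\in\,\mathfrak g$ and $\omega_i\,\in\, H^0(K_\Sigma)$; the assumption $\dim\mathfrak g\,\geq\, 3$ guarantees $\dim W\,\geq\, 3$, which, combined with a general--position refinement of Max Noether's theorem on the canonical ring, yields surjectivity of $W\otimes H^0(K_\Sigma)\,\to\, H^0(K_\Sigma^{\otimes 2})$. The numerical inequality $g\cdot\dim G\,\geq\, 3g-3\,=\,\dim H^1(\Sigma,\,T\Sigma)$ guaranteed by $\dim G\,\geq\, 3$ is precisely the necessary condition for $\Psi$ to be injective at all; low--genus or hyperelliptic cases would require a direct ad hoc construction.
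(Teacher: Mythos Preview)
The present paper is a survey and does not contain a proof of Theorem \ref{thi1}; the result is simply quoted from \cite{BD2}, and the surrounding text only records its statement and consequences. There is thus no proof in this paper to compare your proposal against.

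That said, your outline is the right strategy and is close in spirit to the argument in \cite{BD2}: embed $\mathrm{Syst}^{\mathrm{irred}}$ in the relative de Rham moduli space over Teichm\"uller space, use the isomonodromic foliation to identify $\ker d\mathrm{Mon}$ at $(\Sigma,\nabla)$ with the kernel of the contraction $\Psi\colon H^1(\Sigma,T\Sigma)\to\mathfrak g\otimes H^1(\Sigma,\mathcal O_\Sigma)$, dualize via Serre duality to a multiplication map into $H^0(K_\Sigma^{\otimes2})$, and then exhibit one point where this map is surjective.

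Your write-up nevertheless has real gaps. First, you set aside $g=2$ as requiring an ``ad hoc construction'', but the genus is fixed by the underlying topological surface $X$, so the theorem must be proved there too. In fact $g=2$ is easier than you suggest: since $\dim\mathfrak g\ge3>2=g$, a generic $A$ has $W=H^0(K_\Sigma)$, and the map $\mathrm{Sym}^2H^0(K_\Sigma)\to H^0(K_\Sigma^{\otimes2})$ is an isomorphism of $3$--dimensional spaces. Second, your appeal to a ``general-position refinement of Max Noether's theorem'' when $\dim W=3<g$ is not a standard statement; the numerical inequality $3g\ge3g-3$ is necessary but not sufficient, and this step needs an actual argument (e.g.\ via the base-point-free pencil trick or a careful choice of $W$). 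Third, you must verify that your chosen $(\Sigma,A)$ lies in $\mathrm{Syst}^{\mathrm{irred}}$, i.e.\ that the connection $\mathrm d+A$ is irreducible for your generic $X_i,\omega_i$; this is plausible but not automatic, and without it the semicontinuity argument does not apply.
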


If $G\,=\, {\rm SL}(2,{\mathbb C})$, the dimensions of
$\text{Syst}^{\rm irred} $ and $\Xi^{\rm irred}$ are both $6g-6$. Therefore, 
in that case Theorem \ref{thi1} implies that $\text{Mon}$ is a
local biholomorphism at the generic point.
It should be mentioned that examples constructed in \cite{CDHL} show that for
$G\,=\, {\rm SL}(2,{\mathbb C})$ and $\Sigma$ of genus $g \,\geq\, 3$, the monodromy map $\text{Mon}$ 
is not always a local biholomorphism (over the entire $\text{Syst}^{\text{irred}}$).

When $G\,=\, {\rm SL}(2,{\mathbb C})$ and $g\,=\,2$, the main result of \cite{CDHL} says that the 
map $\text{Mon}$ in \eqref{mm} is a local biholomorphism over entire $\text{Syst}^{\text{irred}}$. 
An alternative proof of this result of \cite{CDHL} was also provided in \cite[Corollary 5.6]{BD2}.

The main motivation for the authors of \cite{CDHL} and \cite{BD2} was the case $G\,=\, {\rm 
SL}(2,{\mathbb C})$ because Ghys strategy relates the monodromy of ${\mathfrak s}{\mathfrak 
l}(2, {\mathbb C})$--differential systems to the question of existence of holomorphic curves 
of genus $g \,>\,1$ lying in quotients of ${\rm SL}(2,{\mathbb C})$ by cocompact lattices.

It was asked in \cite{CDHL} if there exist holomorphic ${\mathfrak s}{\mathfrak l}(2, {\mathbb 
C})$--differential systems on Riemann surfaces of genus $g \,>\,1$ having real or discrete 
monodromy. Indeed, despite those local results obtained in \cite{CDHL}, for curves of genus 
$g\,=\,2$, and then in \cite{BD2, ABDH}. Ghys' question was still open until very recently. In 
particular, it was not known whether it is possible to realize any discrete or Zariski dense 
subgroup in $\text{SL}(2, \mathbb{C})$ as the monodromy of a $\text{SL}(2, \mathbb{C})$--local 
system for some complex structure $\Sigma$ on $X$.

A first answer to this question was given in \cite{BDH} where the main result is a construction of an irreducible holomorphic connection 
with ${\rm SL}(2,\mathbb R)$--monodromy on the rank two trivial holomorphic vector bundle over a compact Riemann surface of genus $g 
\,>\,1$.

Another important step towards realizing Ghys' strategy was very recently made in \cite{BDHH} 
where holomorphic connections with (real) Fuchsian monodromy were constructed. The result is 
the following:

\begin{theorem}[\cite{BDHH}]\label{thm:bdhh}
For every integer $g\,\geq \, 2$, there exists a (hyperelliptic) Riemann surface $\Sigma_g$ of genus $g$, such 
that the rank two trivial holomorphic vector bundle $\Sigma_g\times \mathbb{C}^2$
over $\Sigma_g$ admits infinitely many holomorphic ${\rm SL}(2, \mathbb C)$--connections 
with Fuchsian monodromy representation.
\end{theorem}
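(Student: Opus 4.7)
The plan is to implement Ghys' strategy directly by looking at highly symmetric Riemann surfaces. For each $g\,\geq\,2$, I would choose a hyperelliptic Riemann surface $\Sigma_g$ admitting a large finite group $H$ of holomorphic automorphisms, containing the hyperelliptic involution, with the property that the quotient orbifold $\Sigma_g/H$ is a sphere with only a few conical points. On such a surface, I would search for representations $\rho\,:\,\pi_1(\Sigma_g)\,\longrightarrow\,\mathrm{SL}(2,\mathbb R)$ that are compatible with the $H$--action, in the sense that the associated flat rank two holomorphic bundle carries a natural $H$--equivariant structure. A natural source of such $\rho$'s is the Fuchsian uniformization of the orbifold $\Sigma_g/H$ itself (possibly after passing to a suitable finite cover inside $\Sigma_g$): such a uniformization extends to the orbifold fundamental group, and pulling back to $\pi_1(\Sigma_g)$ automatically produces a Fuchsian representation in the required sense.

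\textbf{Equivariant descent and triviality of the underlying bundle.} An $H$--equivariant rank two holomorphic bundle on $\Sigma_g$ is the pullback of an orbifold bundle $F$ on the quotient $\Sigma_g/H$. When the quotient is an orbifold sphere with few marked points, an orbifold version of Grothendieck's splitting theorem forces $F$ to split as a direct sum of orbifold line bundles, whose orbifold degrees are controlled by the local monodromies of $\rho$ around the conical points and by the constraint $\det\rho\,=\,1$. I would fine-tune the conical angles (equivalently, the orders of the local monodromies of $\rho$) so that each orbifold line summand pulls back to the trivial line bundle on $\Sigma_g$: this is a congruence condition on the ramification indices and on a chosen lift from $\mathrm{PSL}(2,\mathbb R)$ to $\mathrm{SL}(2,\mathbb R)$. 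The upshot is that $\rho$ produces a holomorphic $\mathrm{SL}(2,\mathbb C)$--connection on the trivial rank two bundle over $\Sigma_g$, whose monodromy is Fuchsian by construction, and irreducibility of $\rho$ is used to exclude any parasitic extension in the associated graded.

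\textbf{Infinitely many representations, and the main obstacle.} To produce infinitely many such connections, I would use two sources of deformations: on the one hand, if the Teichm\"uller space of the quotient orbifold is positive-dimensional, one already has a continuous family of Fuchsian uniformizations whose lifts to $\pi_1(\Sigma_g)$ all satisfy the $H$--equivariance and the congruence conditions above; on the other hand, one may also obtain infinitely many inequivalent representations by varying the discrete choices of central lifts and of half-canonical data on $\Sigma_g/H$, as well as by pre-composing with automorphisms of $\Sigma_g$ and post-composing with a $1$--parameter family of conjugations inside $\mathrm{SL}(2,\mathbb R)$. The main obstacle, in my view, is precisely the holomorphic triviality of the underlying bundle: $H$--equivariance only pins down the bundle up to a twist by an equivariant line bundle, and ensuring that one lands exactly on $\mathcal O_{\Sigma_g}^{\oplus 2}$ rather than on a topologically trivial but holomorphically nontrivial neighbour requires a delicate matching between the orbifold ramification data, the hyperelliptic structure of $\Sigma_g$, and the chosen $\mathrm{SL}(2,\mathbb R)$--lift. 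The hyperellipticity assumption is natural here because it provides, genus by genus, an explicit description of the half-canonical bundle $K_{\Sigma_g}^{1/2}$ in terms of the branch divisor, which is what one needs to verify triviality by hand.
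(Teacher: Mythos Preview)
Your proposal has a genuine gap at its core: you are running the argument in the wrong direction. You start from Fuchsian representations (the orbifold uniformization of $\Sigma_g/H$ pulled back to $\Sigma_g$) and then try to argue that the underlying holomorphic bundle happens to be $\mathcal O_{\Sigma_g}^{\oplus 2}$. But this is precisely what fails. The uniformizing Fuchsian representation of a compact Riemann surface gives the \emph{Gunning bundle}, the maximally unstable non-split extension of $K_{\Sigma_g}^{1/2}$ by $K_{\Sigma_g}^{-1/2}$; the paper mentions this explicitly. Your orbifold Grothendieck splitting does not help here: irreducibility of $\rho$ does not force the underlying holomorphic bundle to split as a direct sum of line bundles, and in fact for Fuchsian $\rho$ it never does. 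So your ``fine-tuning of conical angles'' cannot land you on the trivial bundle, and the congruence conditions you propose are conditions on the wrong object.

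Your ``infinitely many'' step is also broken. The theorem asks for infinitely many connections on a \emph{fixed} $\Sigma_g$; varying the Teichm\"uller space of the quotient orbifold changes $\Sigma_g$ itself. Post-composing by conjugation in $\mathrm{SL}(2,\mathbb R)$ does not change the gauge class of the connection, and pre-composing by automorphisms of $\Sigma_g$ yields only finitely many classes.

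The paper's argument goes the other way. One fixes a real, $\mathbb Z_{g+1}$--symmetric $\Sigma_g$ and identifies, via descent to a logarithmic connection on the $4$--punctured sphere, the one (real) line of $\tau$--real equivariant connections on the \emph{trivial} bundle: it is of the form $\nabla+t\Psi$, $t\in\mathbb R$, with $\nabla$ the unique compatible unitary connection and $\Psi$ a Higgs field. The symmetry and $\tau$--reality force all but one of the relevant monodromy traces to be real automatically; it remains to make a single trace $x(t)$ real. One cannot compute $x(t)$, but its $t\to\infty$ asymptotics are governed by WKB analysis: Mochizuki's appendix shows
\[
\lim_{t\to\infty}\mathrm{tr}\bigl(P_\gamma(\nabla+t\Psi)\bigr)\,e^{\,t\int_\gamma\sqrt{\det\Psi}}\;=\;c\neq 0,
\]
with $\int_\gamma\sqrt{\det\Psi}\notin\mathbb R$. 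Hence $x(t)$ oscillates and is real for an infinite sequence $t_n\to\infty$, giving infinitely many Fuchsian connections on the trivial bundle over the same $\Sigma_g$. The decisive idea missing from your proposal is this asymptotic/WKB step; the symmetry reductions you describe are only the set-up.
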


The above result completely answers the question asked in \cite{CDHL} by Calsamiglia, Deroin,
Heu and Loray.

In this context it is also natural to ask which holomorphic rank two bundles over a given 
Riemann surface $\Sigma$ admit holomorphic connections with Fuchsian monodromy 
representations. This question was indeed raised by Katz in \cite[p.~555--556]{Ka} (where the 
question is attributed to Bers) in 1978 and is still unsolved. Even when restricting to the 
trivial rank two holomorphic bundle, it was not known before \cite{BDHH} whether a holomorphic 
connection $\nabla$ with Fuchsian monodromy representation exists.

Eventually, Ghys strategy was successfully realized in the recent work \cite{BDHH2}. In 
\cite{BDHH2} it is first shown that every irreducible $\text{SL}(2, \mathbb 
R)$--representation with sufficient many symmetries can be realized as the monodromy of a 
holomorphic irreducible $\text{SL}(2, \mathbb C)$--connection on the rank two trivial 
holomorphic bundle over some (very symmetric) Riemann surface $\Sigma$ such that 
$\text{genus}(\Sigma)\, \geq\, 2$. Then an example of such a symmetric representation with 
image contained in a cocompact lattice $\Gamma$ in $\text{SL}(2, \mathbb C)$ is constructed, 
with $\Gamma$ being by the dodecahedron tiling $\{5,\,3,\,4\}$ of the hyperbolic 3-space 
${\mathbb H}^3$. The results in \cite{BDHH2} imply the following:

\begin{theorem}[\cite{BDHH2}]\label{thm:bdhh2}
Let $\Gamma$ be the cocompact lattice in ${\mathbb H}^3$ given by the dodecahedron tiling $\{5,\,3,\,4\}$ of
the hyperbolic 3-space. Then there exists a non constant holomorphic map from a compact curve
$\Sigma$ of genus $4$ in ${\rm SL}(2, \mathbb C)/\Gamma.$
\end{theorem}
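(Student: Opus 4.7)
The plan is to implement Ghys' strategy explained in Section~\ref{Sect diff systems}: construct a compact Riemann surface $\Sigma$ of genus $4$ together with an irreducible holomorphic ${\rm SL}(2,\mathbb C)$--connection $\nabla$ on the trivial rank two holomorphic bundle $\Sigma\times\mathbb C^2$ whose monodromy representation $\rho:\pi_1(\Sigma)\to{\rm SL}(2,\mathbb C)$ has image inside $\Gamma$. Once this is accomplished, the parallel frame of $\nabla$ defines a $\pi_1(\Sigma)$--equivariant holomorphic map $\widetilde{\Sigma}\to{\rm SL}(2,\mathbb C)$ which, after quotienting by $\Gamma$, descends to a holomorphic map $\Sigma\to{\rm SL}(2,\mathbb C)/\Gamma$. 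Irreducibility of $\rho$ prevents this map from being constant or from factoring through any elliptic quotient.

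The technical core is to simultaneously ensure that the holomorphic bundle $E_\rho:=\widetilde{\Sigma}\times_\rho\mathbb C^2$ is holomorphically trivial and that $\rho(\pi_1(\Sigma))\subset\Gamma$. Following the strategy indicated in the excerpt, I would first establish a general principle: if $\Sigma$ carries a finite group $H$ of holomorphic automorphisms and $\rho$ is equivariant with respect to a projective lift of the $H$--action into ${\rm PSL}(2,\mathbb C)$, then $E_\rho$ descends to an $H$--equivariant bundle on the quotient orbifold $\Sigma/H$; when $H$ is sufficiently rich (for instance when $\Sigma/H$ is an orbifold sphere with few marked points) the isotypic decomposition together with character-variety considerations forces $E_\rho$ to be holomorphically trivial. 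This formalises the statement in the excerpt that ``sufficiently symmetric'' ${\rm SL}(2,\mathbb R)$--representations can be realised as monodromies of connections on the trivial bundle.

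I would then match this principle to the dodecahedral lattice. Concretely, I would look for a highly symmetric genus $4$ Riemann surface $\Sigma$, for instance a curve with an action of a platonic subgroup of ${\rm PSL}(2,\mathbb C)$ whose quotient orbifold has the combinatorial shape dictated by the triviality principle. In parallel, using the tiling $\{5,3,4\}$, I would exhibit inside $\Gamma$ a surface subgroup isomorphic to $\pi_1(\Sigma)$ that intertwines the $H$--action on $\pi_1(\Sigma)$ with inner symmetries of $\Gamma$ coming from the isometry group of a dodecahedral cell; the genus $4$ should then be forced by Riemann--Hurwitz once the symmetry group and ramification data are fixed. The equivariance of the resulting $\rho$ with respect to $H$ allows application of the triviality principle to conclude that $E_\rho$ is the trivial bundle.

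The main obstacle, and the subtlest part of the argument, is the matching step: one must exhibit an explicit genus $4$ curve with an automorphism group large enough for the triviality principle to apply, together with a faithful Zariski dense representation of its fundamental group into $\Gamma$ which respects the same symmetry. Verifying \emph{irreducibility} (so that the map into ${\rm SL}(2,\mathbb C)/\Gamma$ does not descend through an elliptic curve) and \emph{genuine triviality} of $E_\rho$, rather than mere triviality of the determinant or vanishing of the Chern class, is the delicate point; it is precisely here that the specific arithmetic and combinatorics of the $\{5,3,4\}$--tiling are expected to play a decisive role.
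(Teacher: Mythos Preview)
Your outline captures the overall Ghys strategy correctly, but the central mechanism you propose --- the ``triviality principle'' asserting that $H$--equivariance of $\rho$ together with a simple quotient orbifold \emph{forces} $E_\rho$ to be holomorphically trivial --- is not true as stated, and this is precisely the difficulty the paper has to overcome. Equivariance under the $\mathbb Z_{g+1}$--symmetry only cuts the de~Rham moduli space down to a complex $2$--dimensional family of logarithmic connections on the $4$--punctured sphere; within that family the trivial holomorphic structure on $\Sigma_g$ corresponds to one specific parabolic structure (a single affine line of connections), not to the whole equivariant locus. So for a \emph{given} symmetric $\rho$ and a \emph{given} symmetric Riemann surface $\Sigma$, there is no reason for $E_\rho$ to be trivial. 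Your proposal fixes $\Sigma$ first and then tries to match $\rho$, which cannot work without further input.

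What the paper actually does is the opposite: it fixes the target representation $\rho$ (real, $\tau$--real, $\mathbb Z_{g+1}$--equivariant) and \emph{varies the complex structure} on $\Sigma$ within the real $1$--dimensional family of real symmetric Riemann surfaces, proving an exhaustion statement: the map sending a symmetric Riemann surface to the monodromy of the unique equivariant holomorphic system on its trivial bundle sweeps out the entire real $1$--dimensional space of real, $\tau$--real, equivariant representations. The proof of this exhaustion is the substantial part: one squeezes the holomorphic system between the orbifold uniformization and a \emph{grafting} of another uniformization, uses the new observation that grafting changes the spin structure to produce a path through the trivial bundle joining two different Gunning bundles, and controls the deformation via Loray's analysis of isomonodromic deformations of the Lam\'e equation. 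None of this --- the variation of complex structure, the grafting/spin argument, or the squeezing --- appears in your proposal, and without it there is no way to land on the specific $\rho$ coming from the $\{5,3,4\}$ sublattice.
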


\begin{remark}
The holomorphic map from $\Sigma$ to SL$(2, \mathbb C)/\Gamma$, constructed in
Theorem \ref{thm:bdhh2}, has 4 simple branch points and by Riemann-Hurwitz Theorem it cannot factor through a lower genus surface.
\end{remark}

This gives an affirmative answer to above question raised by
Huckleberry and Winkelmann \cite{HW} and by Ghys \cite{Gh}.

\subsection{Strategy of the proofs of Theorem \ref{thm:bdhh} and Theorem \ref{thm:bdhh2}}

For both above results, namely Theorem \ref{thm:bdhh} and Theorem \ref{thm:bdhh2}, 
straightforward methods such as computing the monodromy representation explicitly or using 
geometric existence results, such as the uniformization metric for Fuchsian representations on 
the Gunning bundle, is not developed or applicable. Therefore, the general philosophy is to 
start with a given monodromy of a known holomorphic differential system and try to deform it 
until the deformed monodromy has the desired properties. This leaves of course a lot of 
freedom for implementation. Though initial setup of both proofs are the same, Theorem 
\ref{thm:bdhh2} requires a completely new set of further ideas as well as some further 
applications of the ideas in Theorem \ref{thm:bdhh}. Hence, we begin with describing the 
general setup as well as the strategy for Theorem \ref{thm:bdhh}.

The first step is to reduce complexity. We first impose various symmetries of the underlying 
Riemann surface and the considered holomorphic differential systems as follows. For every 
genus $g$ consider the complex 1-dimensional family of Riemann surfaces which admit a $\mathbb 
Z_{g+1}$--action with 4 fixed points of order $g,$ i.e, the Riemann surface is given by a 
$(g+1)-$fold covering of the complex projective line totally branched over 4 points. Then the 
connected components of $\mathbb Z_{g+1}$--equivariant connections are complex 2-dimensional 
subspaces of the de Rham moduli space of flat $\mathrm{SL}(2,\mathbb C)$--connections. The 
Fuchsian representations of the symmetric Riemann surfaces automatically give rise to $\mathbb 
Z_{g+1}$--equivariant connections. By \cite{biswas97}, all these equivariant connections are 
determined by logarithmic $\mathrm{SL}(2,\mathbb C)$--connections on the 4-punctured sphere 
with prescribed local conjugacy classes depending (only) on both the genus of the surface and 
the connected component of $\mathbb Z_{g+1}$--equivariant connections inside the de Rham 
moduli space.

For example, the eigenvalues of the residues of a logarithmic connection over the complex 
projective line have to be $\pm\frac{g}{2(g+1)}$, at each of the 4 singular points, in order 
to correspond to an equivariant Fuchsian representation, see \cite[Section 3]{BDHH}. Moreover, 
there exist exactly one affine (complex) line of logarithmic connections in each of these 
connected components which correspond to holomorphic systems on the compact Riemann surface of 
genus $g,$ given by a specific parabolic structure (see \cite[Theorem 3.2 (5)]{HHSch} or 
\cite[Proposition 3.1]{BDHH}).

By imposing compatible reality conditions on the Riemann surface and the flat connections respectively, we can reduce further such that 
(the smooth components of) the moduli space of real symmetric Riemann surfaces is real 1-dimensional, and the real subspace of the Betti moduli space is real 2-dimensional. In other words, the considered Riemann surfaces possess a real involution $\tau$ compatible 
with the $\mathbb Z_{g+1}$--action (which is equivalent to the 4 branch points lying on one circle) and the considered connections $\nabla$ lies in the 
same gauge class as $\tau^*\overline{\nabla}$, we refer to $\nabla$ then as a $\tau$--real connection. Considering $\tau-$real connections on real Riemann surfaces leads to certain traces of the monodromies being automatically real. In order for the connection to be called real in following, we require all the monodromy traces along every closed curve to be real.

\subsubsection{Theorem \ref{thm:bdhh}}

Theorem \ref{thm:bdhh} asserts the existence of infinitely many holomorphic systems with Fuchsian monodromy on a real symmetric Riemann 
surface. The first observation is that $\tau$--real connections in the appropriate component (determined by the eigenvalues) give rise to 
Fuchsian representations if it is real and the traces satisfy some additional inequalities. Since the underlying Riemann surface is a 
4-punctured sphere, the character variety, or its Betti moduli space, is determined by 3 traces $x,\,y$ and $z$. For given $x$ and $y$ the 
third one must satisfy the character variety equation, which has two solutions $z$ and $\widetilde z$. Due to $\tau-$symmetry $z$ and 
$\widetilde z$ are automatically real and the reality of $x$ implies then $y$ being real as well. When working with the one-puncture 
torus instead of the 4-punctured sphere we can avoid dealing with further inequalities determining the connected component of Fuchsian 
representations. In other words, in order to find a Fuchsian representation it suffices to show that just one (additional) trace $x$ is 
real on the one-punctured torus.

The trivial holomorphic structure when lifting the rank two flat bundle to the compact Riemann 
surface $\Sigma_g$ of genus $g$ is determined by a specific parabolic structure (see 
\cite[Theorem 3.2 (5)]{HHSch} or \cite[Proposition 3.1]{BDHH}). Since the space of 
($\tau$--symmetric) Higgs fields on the other hand is only (real) one-dimensional, i.e., 
generated by some $\Psi,$ it remains to show that there exist (infinitely many) parameters $t 
\in \mathbb R$ such that $\nabla + t \Psi$ has real $x$, where $\nabla$ is the unique unitary 
connection inducing that parabolic structure. Again, computing the monodromy directly is out 
of reach, but its asymptotic for large $t$ can be studied using $WKB$ analysis. Though the 
method has been carried out successfully in various situations, somehow surprisingly, the 
existing mathematical literature did not cover the asymptotic analysis of the trace of the 
monodromy, at least to the best of our knowledge. It was then shown by Takuro Mochizuki in the 
appendix of \cite{BDHH} that
$$\lim_{t\to\infty}\text{tr}(P_\gamma(\nabla +t \Psi))e^{t\int_\gamma \sqrt{\det\Psi}}\,\,=\,\,c,$$
where $P_\gamma(\nabla +t \Psi)$ is the monodromy of $\nabla+t \Psi$ along the curve $\gamma$
which has to satisfy
\begin{equation}\label{wkb-curve}\Re(\sqrt{\det\Psi}(\gamma'))\,\,<\,\,0,
\end{equation}
and $c\,\neq\,0$ is a non-zero constant. As the curve $\gamma$ corresponding to the trace $x$ 
satisfies \eqref{wkb-curve} and $\int_\gamma \sqrt{\det\Psi}\,\notin\,\mathbb R$ we find 
infinitely many parameters $t_n$ accumulating at $t\,=\,\infty$ for which all monodromies of the 
connections $\nabla+t_n \Psi$ are real and therefore give Fuchsian representations.

\subsubsection{ Theorem \ref{thm:bdhh2}}

While in the space of $\text{SL}(2, \mathbb R)$--representations, the Fuchsian ones form a 
whole connected component, cocompact lattices in $\text{SL}(2, \mathbb C)$ are discrete. 
Therefore, a new set of ideas is needed to attain a specific representation with holomorphic 
connections rather than just reaching the component when varying a parameter. In contrast to 
Theorem \ref{thm:bdhh} where we fixed the Riemann surface type and show existence of countably 
infinite many Fuchsian representations on the trivial holomorphic bundle, we now vary the 
Riemann surface type and show that the representations obtained from holomorphic system 
exhaust the whole (real 1-dimensional) space of symmetric and real representations. Theorem 
\ref{thm:bdhh2} then follows from the fact that the dodecahedron tilling has a sublattice 
inducing such a symmetric and real representation.

In other words, let $\rho$ be an arbitrary real and symmetric representation and let $\rho^F$ 
be a suitable real, $\tau$--real and $\mathbb Z_{g+1}$--equivariant representation induced by 
the monodromy of a holomorphic system on a real, symmetric Riemann surface. Then the aim is to 
deform $\rho^F$ via deforming the underlying (real, symmetric) Riemann surface structure and 
the corresponding holomorphic system until we hit $\rho$.

The existence of the initial value $\rho^F$ is guaranteed by \cite{BDHH}, but the proof of 
\ref{thm:bdhh2} in fact gives an independent proof for the existence of the Fuchsian 
representations of \cite{BDHH}. Local existence and uniqueness of the deformation by varying 
the underlying real symmetric Riemann surface is based on a technical Lemma \cite[Lemma 
5.2]{BDHH2} which was motivated by the main result of \cite{CDHL} for genus 2 surfaces and 
whose proof uses the detailed analysis of the isomonodromic deformations of the Lam\'e 
equation by Loray in \cite{Loray16}. An important observation is the following. We can squeeze 
a real, $\tau$--real and equivariant holomorphic system between the (orbifold) uniformization 
of the Riemann surface and a grafting of the (orbifold) uniformization of another real, 
symmetric Riemann surface inside the ordered space of real, $\tau$--real and equivariant 
representations. The ordering of all three connections is preserved along the deformation, and 
it is shown that the orbifold uniformization and their graftings both cover the whole 
1-dimensional space of real, $\tau$--real and equivariant representations. We should emphasize 
that the induced deformation of the representations is not necessarily continuous introducing 
some technicalities to show the exhaustion property.

The last point to explain here is why and how we can squeeze the holomorphic system with real 
monodromy between a (orbifold) uniformization and a grafting of another uniformization. The 
main ingredient is the new observation that grafting changes the spin structure in a very 
natural way. Using this we can show the existence of a non-trivial path in the moduli space of 
bundles which admit a unique lift to real, $\tau$--real and equivariant connections, which 
starts and ends at two different Gunning bundles (respectively maximally unstable bundles in a 
given component of equivariant connections) and passes through the trivial holomorphic 
structure. This path naturally deforms under the deformation of the underlying Riemann surface 
structure, and as orbifold uniformisations and their graftings exhaust the 1-dimensional space 
of real, $\tau$--real equivariant connections the existence of holomorphic curves of genus 
$g>1$ in certain quotients $\mathrm{SL}(2,\mathbb C)/\Gamma$ by cocompact lattices follows.

\section{\'Etale trivial bundles} \label{Sect finite}

Let $M$ be a compact connected complex manifold. The tangent bundle and the $i$--exterior product of the cotangent bundle of $M$ will
be denoted by $TM$ and $\Omega^i_M$ respectively. A holomorphic connection on a holomorphic vector bundle $E$ on $M$ is a first
order holomorphic differential operator $D\, :\, E\, \longrightarrow\, E\otimes\Omega^1_M$ satisfying the Leibniz
identity, which says that $D(fs)\,=\, fD(s)+ s\otimes df$ for all locally defined holomorphic section $s$ of $E$ and
all locally defined holomorphic function $f$ on $M$ (see \cite{At}). Any coherent analytic sheaf on $M$ admitting a holomorphic
connection is locally free (see \cite[p.~211, Proposition 1.7]{Bo} and \cite[p.~1037, Remark 2.1]{BDDH}).
A holomorphic connection $D$ is called integrable if its curvature
$D^2\, \in\, H^0(M,\, \text{End}(E)\otimes\Omega^2_M)$ vanishes identically. For an integrable holomorphic connection $D$
we have the monodromy homomorphism
\begin{equation}\label{e1}
\text{Mon}(D)\,\,:\,\,\pi_1(M,\, x_0)\, \longrightarrow\, \text{GL}(E \vert_{x_0})\, ,
\end{equation}
where $x_0\, \in\, M$ is any base point.

A holomorphic vector bundle $E$ on $M$ is called \'etale trivial
if there is a finite \'etale covering $\psi\, :\, \widetilde{M}\, \longrightarrow\, M$ such that
$\psi^*E$ is holomorphically trivial.

The following lemma is straightforward.

\begin{lemma}\label{lem1}
A holomorphic vector bundle $E$ on $M$ is \'etale trivial if and only if $E$ admits an integrable holomorphic connection
$D$ such the image of the monodromy homomorphism ${\rm Mon}(D)$ in \eqref{e1} is a finite subgroup of ${\rm GL}(E \vert_{x_0})$.

Such a connection is uniquely determined by the following property. If $\psi:\widetilde M\longrightarrow M$ is a finite Galois covering for which there exists an isomorphism $h:\psi^*E\longrightarrow \mathcal O_{\widetilde M}^r$, then $\psi^*D$ corresponds, under $h$, to the trivial connection on $\mathcal O_{\widetilde M}^r$.
\end{lemma}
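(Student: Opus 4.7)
I would treat the equivalence and the uniqueness by exploiting the Galois correspondence between finite \'etale covers of $M$ and finite-index normal subgroups of $\pi_1(M,x_0)$.

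For the ``only if'' direction of the equivalence, start from a finite \'etale cover $\psi:\widetilde M \longrightarrow M$ trivializing $E$. Replacing $\psi$ by its Galois closure (and restricting to a connected component if needed), assume $\psi$ is Galois with group $G$ and $\widetilde M$ is connected. Fix a trivialization $h:\psi^*E \longrightarrow \mathcal O_{\widetilde M}^r$. The canonical $G$-equivariant structure on $\psi^*E$, transported via $h$, provides for each $g\in G$ an automorphism of $\mathcal O_{\widetilde M}^r$ covering the biholomorphism $g:\widetilde M \longrightarrow \widetilde M$; since $\widetilde M$ is compact and connected, $H^0(\widetilde M,\mathcal O_{\widetilde M})=\mathbb C$, so this automorphism is a constant matrix in $\text{GL}(r,\mathbb C)$. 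The trivial connection on $\mathcal O_{\widetilde M}^r$ is preserved both by pullback along biholomorphisms and by constant-matrix gauge transformations, so it is $G$-invariant and descends to an integrable holomorphic connection $D$ on $E$, whose monodromy factors through the representation $G \longrightarrow \text{GL}(r,\mathbb C)$ and is therefore finite.

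For the ``if'' direction, let $D$ be an integrable holomorphic connection on $E$ with finite monodromy image. The kernel of $\text{Mon}(D)$ has finite index in $\pi_1(M,x_0)$ and thus corresponds to a connected finite Galois cover $\psi:\widetilde M \longrightarrow M$ on which $\psi^*D$ has trivial monodromy. Since parallel transport by a holomorphic flat connection is holomorphic, picking a basis of the fiber $\psi^*E\vert_{\widetilde x_0}$ and transporting along arbitrary paths yields $r$ global horizontal holomorphic sections forming a frame of $\psi^*E$. This produces a trivialization $h:\psi^*E \longrightarrow \mathcal O_{\widetilde M}^r$ identifying $\psi^*D$ with the trivial connection, so $E$ is \'etale trivial and the connection $D$ simultaneously satisfies the property stated in the second part of the lemma. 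For uniqueness, suppose $D$ and $D'$ both enjoy the stated property. Take a finite Galois cover $\psi:\widetilde M \longrightarrow M$ trivializing $E$ (use a common refinement of the covers associated to $D$ and $D'$), together with trivializations $h,h':\psi^*E \longrightarrow \mathcal O_{\widetilde M}^r$ rendering $\psi^*D$ and $\psi^*D'$ trivial. The automorphism $h'\circ h^{-1}$ of $\mathcal O_{\widetilde M}^r$ is a constant element of $\text{GL}(r,\mathbb C)$, and constant gauges preserve the trivial connection, so $\psi^*D=\psi^*D'$. Since $\psi$ is \'etale, pullback of connections is injective, whence $D=D'$.

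The only step carrying any real content is the observation that holomorphic automorphisms of the trivial bundle on compact connected $\widetilde M$ are constant matrices (maximum principle), which powers both the descent of the trivial connection and the uniqueness comparison. Everything else is a routine exercise in the correspondence between finite \'etale covers and finite-index subgroups of $\pi_1(M,x_0)$ and in the compatibility of parallel transport with holomorphic structures.
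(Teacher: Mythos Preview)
Your proof is correct and follows essentially the same strategy as the paper: pass to a Galois trivializing cover, use that holomorphic automorphisms of a trivial bundle on a compact connected manifold are constant matrices (the paper phrases this via the evaluation map $\widetilde M\times H^0(\widetilde M,\psi^*E)\stackrel{\sim}{\longrightarrow}\psi^*E$, but it is the same observation), descend the trivial connection by Galois invariance, and conversely trivialize via flat sections once the monodromy has been killed. Your uniqueness argument is in fact more explicit than the paper's (which records only the general remark that horizontality of $\psi^*f$ implies horizontality of $f$); your reduction to the fact that the transition $h'\circ h^{-1}$ is a constant matrix is exactly the missing step, and the parenthetical about ``common refinements'' is harmless but unnecessary, since the stated property already applies to \emph{any} trivializing Galois cover and \emph{any} trivialization $h$, so a single $\psi$ and a single $h$ suffice to compare $D$ and $D'$.
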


\begin{proof}
If the image of $\text{Mon}(D)$ is finite, then consider the \'etale Galois covering
$\psi\, :\, \widetilde{M}\, \longrightarrow\, M$ corresponding to $\text{kernel}(\text{Mon}(D))\, \subset\,
\pi_1(M,\, x_0)$. The pulled back holomorphic connection $\psi^*D$ on $\psi^*E$ has trivial monodromy, and
hence $\psi^*E$ is holomorphically trivial.

Given any a connected finite \'etale covering $\psi_1\, :\, \widetilde{M}_1\, \longrightarrow\, 
M$, there is a connected finite \'etale covering $\psi_2\, :\, \widetilde{M}\, 
\longrightarrow\, \widetilde{M}_1$ such that $\psi\, :=\, \psi_1\circ\psi_2$ is Galois. If
$\psi^*_1E$ is holomorphically trivial, then $\psi^*E$ is also holomorphically trivial. So 
if $E$ is \'etale trivial, then we may assume that the trivializing connected finite \'etale 
covering $\psi$ is Galois. If $\psi^* E$ is holomorphically trivial, then the natural 
evaluation map
$$
\widetilde{M}\times H^0(\widetilde{M},\, \psi^*E)\, \longrightarrow\, \psi^*E
$$
is a holomorphic isomorphism. Using this isomorphism, the trivial holomorphic connection on $\widetilde{M}
\times H^0(\widetilde{M},\, \psi^*E)$ produces an integrable holomorphic connection $\widetilde D$ on $\psi^*E$ with trivial monodromy.
This holomorphic connection $\widetilde D$ is preserved by the natural action of $\text{Gal}(\psi)$ on $\psi^*E$, and
hence it descends to a holomorphic connection on $E$. This descended holomorphic connection on $E$ is evidently integrable
and its monodromy homomorphism has finite image.

To verify the last property, we observe the following. Let $f:E\longrightarrow F$ be an arrow between holomorphic vector bundles and $\psi\,:\,\widetilde{M}\,\longrightarrow\, M$ an \'etale covering such that $\psi^*E$ and $\psi^*F$ are trivial vector bundles. Suppose that $\partial_E$ and $\partial_F$ are integrable connections on $E$ and $F$, respectively, such that 
\[
\psi^*(f)\,\,:\,\,(\psi^*E,\,\psi^*\partial_E)\,\longrightarrow\,(\psi^*F,\,\psi^*\partial_F)
\] 
is horizontal. It then follows that $f$ is horizontal. 
\end{proof}

Take any polynomial $p(x)\,=\, \sum_{i=0}^d a_ix^i$, where $a_i$ are nonnegative integers. For any holomorphic vector bundle
$E$ on $M$, define the holomorphic vector bundle
$$
p(E)\,:=\, \bigoplus \left(E^{\otimes i}\right)^{\oplus a_i}\, ,
$$
where $E^{\otimes 0}$ is the trivial holomorphic line bundle ${\mathcal O}_M$, and $F^{\oplus 0}=0$ for any vector bundle $F$. A holomorphic vector bundle $E$ is called finite if there are two polynomials $p$ and $q$ as above, with
$p\, \not=\, q$, such that the two holomorphic vector bundles $p(E)$ and $q(E)$ are holomorphically isomorphic
\cite[p. 35]{No1}, \cite{No2}.

The following is a criterion for a vector bundle to be finite.

\begin{theorem}[{\cite[Theorem 1.1]{Bi}}]\label{thm1}
A holomorphic vector bundle $E$ on $M$ is finite if and only if it is \'etale trivial.
\end{theorem}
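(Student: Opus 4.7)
The plan is to prove both implications by routing through Lemma \ref{lem1}, which reformulates \'etale triviality as the existence of an integrable holomorphic connection with finite monodromy image. This reduces the theorem to the statement that $E$ is finite if and only if it admits an integrable holomorphic connection whose monodromy representation has finite image in $\mathrm{GL}(E|_{x_0})$.

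For the direction ``\'etale trivial $\Rightarrow$ finite'' the argument is purely representation-theoretic. By Lemma \ref{lem1}, the bundle $E$ arises from a representation $\rho\,:\,\pi_1(M,x_0)\,\longrightarrow\,\mathrm{GL}(r,\mathbb{C})$ factoring through a finite quotient $G$. Let $[V]\in R(G)$ denote the class of $\rho$ in the complex representation ring of $G$. Since $R(G)$ is a finitely generated $\mathbb{Z}$-module, the powers $\{[V]^n\}_{n\ge 0}$ cannot remain $\mathbb{Z}$-linearly independent, so one obtains a nontrivial integer relation $\sum c_i[V]^i=0$. Separating its coefficients by sign yields distinct polynomials $p,q$ with nonnegative integer coefficients such that $p([V])=q([V])$ in $R(G)$; since complex $G$-representations are semisimple, this identity lifts to a genuine isomorphism $p(V)\cong q(V)$ of $G$-modules, and the associated bundle construction delivers $p(E)\cong q(E)$.

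For the converse ``finite $\Rightarrow$ \'etale trivial'' I would follow a Nori-style Tannakian strategy. Let $\langle E \rangle$ denote the smallest full subcategory of coherent analytic sheaves on $M$ containing $E$ and closed under direct sums, tensor products, duals, and locally free subquotients. The combinatorial core of the argument is to show that $\langle E \rangle$ contains only finitely many isomorphism classes of indecomposable objects. After relabeling, assume the hypothesis $p(E)\cong q(E)$ can be rewritten as $\left(E^{\otimes d}\right)^{\oplus (a_d-b_d)}\oplus\bigoplus_{i<d}\left(E^{\otimes i}\right)^{\oplus a_i}\cong\bigoplus_{i<d}\left(E^{\otimes i}\right)^{\oplus b_i}$, with $d$ the largest degree at which $p$ and $q$ disagree and $a_d>b_d$. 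Applying the Krull--Schmidt theorem, valid for coherent sheaves on compact complex manifolds by a theorem of Atiyah, one deduces that every indecomposable summand of $E^{\otimes d}$ already occurs in some $E^{\otimes i}$ with $i<d$; tensoring this identity with $E^{\otimes m}$ and iterating extends the finiteness to all tensor powers, and then to all of $\langle E \rangle$. Equipping $\langle E \rangle$ with the fibre functor $F\mapsto F|_{x_0}$ should then exhibit it as a neutral Tannakian category whose dual affine group scheme is finite, and the universal torsor over this finite (reduced) group scheme furnishes the sought \'etale Galois cover $\widetilde{M}\to M$ on which $E$ becomes holomorphically trivial.

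The main obstacle I expect lies upstream of the Tannakian reconstruction: one must show that every object of $\langle E \rangle$ admits an integrable holomorphic connection, so that the fibre functor $F\mapsto F|_{x_0}$ is genuinely compatible with a flat structure --- in the compact complex, not necessarily K\"ahler, setting, the standard tools (semistability relative to a polarization, Grothendieck's faithfully flat descent) are not directly available. The route I would attempt exploits the functoriality of the Atiyah class under $\oplus$ and $\otimes$: the identity $p(E)\cong q(E)$ imposes a linear constraint among the classes $\mathrm{at}(E^{\otimes i})\in H^1(M,\mathrm{End}(E^{\otimes i})\otimes \Omega_M^1)$ which, combined with the finiteness of indecomposables established above, should force $\mathrm{at}(E)=0$ and thereby produce a holomorphic connection on $E$. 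Its integrability and the finiteness of its monodromy would then follow by applying the polynomial identity first to the curvature two-form in $H^0(M,\mathrm{End}(E)\otimes \Omega_M^2)$ and afterwards to the image of the monodromy representation, at which point Lemma \ref{lem1} closes the argument.
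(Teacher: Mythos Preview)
The paper does not contain a proof of this theorem: it is quoted verbatim from \cite{Bi} and only the sentence ``In \cite{No1}, \cite{No2}, Theorem \ref{thm1} was proved in the algebro-geometric set-up'' is offered as commentary. So there is nothing in the present paper to compare your argument against; any comparison would have to be with the proof in \cite{Bi} itself.

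On the substance of your proposal: the direction ``\'etale trivial $\Rightarrow$ finite'' is correct and is the standard argument. The outline for ``finite $\Rightarrow$ \'etale trivial'' follows Nori's original strategy faithfully up to the point where one must produce an abelian Tannakian category, and you are right to flag that this is where the difficulty lies on a compact complex manifold with no polarization. However, the route you sketch through the Atiyah class does not work as stated. The relation $p(E)\cong q(E)$ gives an equality of Atiyah classes inside $H^1(M,\mathrm{End}(p(E))\otimes\Omega^1_M)$, but the classes $\mathrm{at}(E^{\otimes i})$ live in \emph{different} cohomology groups for different $i$, and the isomorphism $p(E)\cong q(E)$ is not in any sense compatible with the decomposition of $\mathrm{at}(E^{\otimes i})$ coming from the Leibniz rule; there is no mechanism here that forces $\mathrm{at}(E)=0$. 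Even granting $\mathrm{at}(E)=0$, the subsequent steps --- deducing integrability of some chosen connection from a polynomial identity on curvature forms, and then finiteness of the monodromy from the same identity applied to the monodromy image --- are not arguments but hopes: a non-canonical holomorphic connection has no reason to be flat, and an arbitrary flat connection on a finite bundle has no reason to have finite monodromy (add a generic closed $1$-form to the canonical connection on $\mathcal O_M$). The actual proof in \cite{Bi} proceeds quite differently and does not attempt to manufacture a holomorphic connection directly from the finiteness relation.
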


In \cite{No1}, \cite{No2}, Theorem \ref{thm1} was proved in the algebro-geometric set-up.
Theorem \ref{thm1} is a key ingredient in the proof of the following result (whose algebro-geometric counterpart appears in \cite[Remarks, pp. 232-3]{BdS}.)

\begin{theorem}[{\cite[Theorem 1.1]{BD}}]\label{thm2}
Let $f\, :\, X\, \longrightarrow\, M$ be a surjective holomorphic maps between compact connected complex
manifolds and $E\, \longrightarrow\, M$ a holomorphic vector bundle, such that $f^*E$ is holomorphically
trivial. Then $E$ is \'etale trivial.
\end{theorem}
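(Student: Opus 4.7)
The plan is to verify that $E$ is finite in the sense of Nori, and then invoke Theorem \ref{thm1} to conclude \'etale triviality. The argument proceeds by reducing to the case of a finite Galois cover via Stein factorization and Galois closure, and then by a pigeonhole argument in the representation ring of the resulting finite group.

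First, applying Stein factorization to the proper surjective map $f$ gives $f = h \circ g$ with $g: X \to Y$ having connected fibres (so $g_*\mathcal{O}_X = \mathcal{O}_Y$, with $Y$ a compact normal complex space) and $h: Y \to M$ finite. The projection formula then yields $h^*E \cong g_*(f^*E) \cong \mathcal{O}_Y^r$, so one may replace $f$ by $h$. Taking a Galois closure as the normalization of a suitable irreducible component of an iterated fibre product of $Y$ over $M$, one reduces to the case where $f: X \to M$ is finite Galois with finite group $G$ and $f^*E \cong \mathcal{O}_X^r$. Since $X$ is compact and connected, $H^0(X, \mathcal{O}_X) = \mathbb{C}$, so every $\mathcal{O}_X$-linear automorphism of $\mathcal{O}_X^r$ is a constant matrix in $GL_r(\mathbb{C})$. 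Transporting the canonical $G$-equivariant structure on $f^*E$ through the given trivialization therefore produces a representation $\rho: G \to GL_r(\mathbb{C})$; descent of $f^*E$ to $E$ on $M$ forces $\rho|_{G_x} = 1$ for every $x \in X$, and the tensor powers $E^{\otimes n}$ correspond to $\rho^{\otimes n}$.

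Since $G$ is finite, the representation ring $R(G)$ is a finitely generated $\mathbb{Z}$-module, so the classes $[\rho^{\otimes n}]$, $n \geq 0$, satisfy a non-trivial $\mathbb{Z}$-linear relation. Splitting such a relation into its positive and negative parts yields distinct polynomials $p, q$ with non-negative integer coefficients such that $p(\rho) \cong q(\rho)$ as $G$-representations; the equivalence between bundles on $M$ trivializing on $X$ and representations of $G$ satisfying the descent condition (automatic on tensor summands) then gives $p(E) \cong q(E)$. Thus $E$ is finite in Nori's sense, and Theorem \ref{thm1} delivers \'etale triviality. The main technical obstacle I expect is the construction of the Galois closure of a finite surjective morphism in the compact-complex-analytic category; once this is in hand, the descent-and-pigeonhole argument goes through without further difficulty.
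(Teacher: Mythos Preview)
The paper does not give its own proof of this theorem; it only records that Theorem~\ref{thm1} is ``a key ingredient'' and refers to \cite{BD} for the argument. Your proposal is consistent with this indication: you reduce to showing that $E$ is finite in Nori's sense and then invoke Theorem~\ref{thm1}, which is exactly the strategy the paper points to.

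Your outline is sound. The Stein factorization step and the projection-formula computation $h^*E\cong g_*(f^*E)\cong\mathcal O_Y^r$ are correct. After passing to a connected normal Galois closure $\widetilde X\to M$ with group $G$, compactness and connectedness give $H^0(\widetilde X,\mathcal O_{\widetilde X})=\mathbb C$, so the transported $G$--equivariant structure on $\mathcal O_{\widetilde X}^r$ is indeed governed by a genuine representation $\rho:G\to\mathrm{GL}_r(\mathbb C)$. Full faithfulness of $f^*$ into $G$--equivariant bundles (via $(f_*\mathcal O_{\widetilde X})^G=\mathcal O_M$) then converts $p(\rho)\cong q(\rho)$ into $p(E)\cong q(E)$, and the pigeonhole in the finitely generated abelian group $R(G)$ supplies such $p\neq q$. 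One small remark: your observation that $\rho|_{G_x}=1$ at ramification points is correct but not needed for the argument---you only use full faithfulness of pull-back, not essential surjectivity of descent.

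The one genuine technical point, which you yourself flag, is the construction of the Galois closure of a finite surjective holomorphic map between compact normal complex spaces. This is available via Grauert--Remmert: restrict to the \'etale locus over the complement of the branch divisor, pass to the Galois closure of that finite \'etale cover (normal core in $\pi_1$), and take the normal analytic extension over $M$. Once this is granted, your argument goes through as written.
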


\subsection{A neutral Tannakian category}

Let $M$ be a compact connected complex K\"ahler manifold. Fix a base point $x_0\, \in\, M$. In 
\cite{BDDH}, the following was studied: The category $\mathcal T(M)$ consisting of trivial vector bundles 
endowed with an integrable connection. Using the fiber functor $E\,\,\longmapsto\,\,E|_{x_0}$, 
it was deduced that $\mathcal T(M)$ is equivalent to the category of representations of an affine 
group scheme \[\Theta(M,\,\,x_0).\] A basic property of $\Theta(M,\,x_0)$ is that it is 
pro-connected \cite[Proposition 3.7]{BDDH}. 

Let ${\mathcal C}_{\mathrm{dR}}(M)$ denote the category whose objects are pairs of the form
$(E,\, D)$, where $E$ is a holomorphic vector bundle on $M$ and $D$ is an integrable
holomorphic connection on $E$. As before, morphisms from
$(E,\, D)$ to $(E',\, D')$ are all holomorphic homomorphisms of vector bundles
$h\, :\, E\, \longrightarrow\, E'$ satisfying $D'\circ h\,=\,
(h\otimes{\rm Id}_{\Omega^1_M})\circ D$ as differential operators from $E$ to
$E'\otimes\Omega^1_M$. This category is equipped with the operators of direct sum, tensor
product and dualization. More precisely, ${\mathcal C}_{\mathrm{dR}}(M)$ is a rigid abelian tensor category
(see \cite[p.~118, definition 1.14]{DMOS} for rigid abelian tensor categories).
This category ${\mathcal C}_{\mathrm{dR}}(M)$, endowed with the faithful fiber functor that sends any object
$(E,\, D)$ to the fiber $E |_{x_0}$ defines a neutral Tannakian category \cite[p.~67]{Si}. The
corresponding pro-algebraic affine group scheme over $\mathbb C$ will be denoted by $\varpi(M,\, x_0)$
(see \cite[p.~69]{Si}, \cite[Section 2]{BDDH}).
Concerning the relation between $\mathcal C_{\rm dR}(M)$ and $\mathcal T(M)$, we can say that the morphism of group schemes 
\begin{equation}\label{17.10.2022--1}
\mathbf q:\varpi(M,\,x_0)\,\longrightarrow\,\Theta(M,\,x_0)
\end{equation}
induced by the inclusion $\mathcal T(M)\longrightarrow \mathcal C_{\rm dR}(M)$ is \emph{faithfully flat}, or a {\it quotient} morphism. See Proposition 3.1 in \cite{BDHH}.

Let us now modify our point of view and enlarge the category of vector bundles on which we allow a connection. 
Let ${\mathcal T}^{\mathrm{et}}(M)$ denote the category of all pairs of the form $(E,\, D)$, where
$E$ is an \'etale trivial vector bundle on $M$ and $D$ is a holomorphic connection on $E$. This ${\mathcal T}^{\mathrm{et}}(M)$
is evidently closed under the operations of direct sum and tensor product, just as it is closed
under the operation of taking dual. For any holomorphic homomorphism between two holomorphically trivial vector bundles
over a compact connected complex manifold,
both kernel and cokernel are holomorphically trivializable because the homomorphism is given by a constant matrix. From this
it follows that for any two pairs $(E_1,\, D_1)$ and $(E_2,\, D_2)$ in ${\mathcal T}^{\mathrm{et}}(M)$,
any homomorphism $\rho\, :\, E_1\, \longrightarrow\, E_2$, satisfying the condition that
$$
D_2\circ\rho\,=\, (\rho\otimes{\rm Id}_{\Omega^1_M})\circ D_1,
$$
has the property that both $\text{kernel}(\rho)$ and $\text{cokernel}(\rho)$ are \'etale 
trivial bundles, and furthermore, $D_1$ (respectively, $D_2$) induces a holomorphic connection on 
$\text{kernel}(\rho)$ (respectively, $\text{cokernel}(\rho)$).

Therefore, ${\mathcal T}^{\mathrm{et}}(M)$ together with the faithful fiber functor on it that
sends any $(E,\, D)$ to the fiber $E\big\vert_{x_0}$ produce a neutral Tannakian category (see \cite[p.~138,
Definition 2.19]{DMOS}, \cite{Sa}, \cite[p.~76]{No2} for neutral Tannakian category).
For any neutral Tannakian category there is a naturally associated pro-algebraic affine
group scheme over $\mathbb C$ \cite[p.~130, Theorem 2.11]{DMOS} (and the remark following \cite[p.~138,
Definition 2.19]{DMOS}), \cite{Sa}, \cite[p.~77, Theorem 1.1]{No2}, \cite[p.~69]{Si}.
Consequently, the neutral Tannakian category ${\mathcal T}^{\mathrm{et}}(M)$
produces a pro-algebraic group (= affine group scheme) 
\[\mathbb{E}(M,\, x_0)\]
over $\mathbb C$ such that by means of the functor $\bullet|_{x_0}$, the category ${\mathcal T}^{\mathrm{et}}(M)$ becomes equivalent to the category of representations of $\mathbb{E}(M,\, x_0)$ on finite dimensional complex vector spaces.

The tautological functor ${\mathcal T}^{\mathrm{et}}(M) \, \longrightarrow\, {\mathcal C}_{\mathrm{dR}}(M)$ produces a homomorphism
\begin{equation}\label{e3}
\mathbf r\,\, :\,\, \varpi(M,\, x_0)\,\, \longrightarrow\, \mathbb{E}(M,\, x_0)\, .
\end{equation}

\begin{lemma}\label{lem3}
The homomorphism $\mathbf r$ in \eqref{e3} is faithfully flat (in other words, it is surjective).
\end{lemma}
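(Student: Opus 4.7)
The plan is to apply the standard Tannakian criterion of \cite[Proposition 2.21]{DMOS}: the morphism $\mathbf r$ is faithfully flat if and only if the inclusion functor $\mathcal T^{\mathrm{et}}(M)\hookrightarrow\mathcal C_{\mathrm{dR}}(M)$ is fully faithful and every subobject (in $\mathcal C_{\mathrm{dR}}(M)$) of an object coming from $\mathcal T^{\mathrm{et}}(M)$ is isomorphic to a subobject already coming from $\mathcal T^{\mathrm{et}}(M)$. Full faithfulness is built into the definition of $\mathcal T^{\mathrm{et}}(M)$ as a full subcategory, so everything reduces to the following subbundle statement: \emph{if $(E,D)$ belongs to $\mathcal T^{\mathrm{et}}(M)$ and $F\subset E$ is a holomorphic subbundle preserved by $D$, then $F$ is \'etale trivial.}

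To prove this, I would first pick, using Lemma \ref{lem1}, a finite Galois \'etale covering $\psi:\widetilde M\to M$ with $\psi^*E\simeq\mathcal O_{\widetilde M}^{r}$; then $\psi^*F\subset\mathcal O_{\widetilde M}^{r}$ is a holomorphic subbundle on the compact K\"ahler manifold $\widetilde M$ which carries the induced holomorphic connection $\psi^*(D|_F)$. By Theorem \ref{thm2} applied to the finite surjective holomorphic map $\psi$, it will be enough to show that $\psi^*F$ is holomorphically trivial.

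The heart of the argument is this triviality statement, and here is the proposed route. Since $\psi^*F$ admits a holomorphic connection, Atiyah's theorem gives the vanishing of its Atiyah class and, in turn, the vanishing of all Chern classes of $\psi^*F$ in complex (hence real) cohomology; in particular, $c_1(\psi^*F)=0$. The inclusion $\psi^*F\subset\mathcal O_{\widetilde M}^{r}$ is classified by a holomorphic map $\phi:\widetilde M\to G(k,r)$ to the Grassmannian such that $\phi^*S=\psi^*F$ for the tautological subbundle $S$. Fix a K\"ahler form $\omega$ on $G(k,r)$ whose class is a positive multiple of $c_1(S^*)$. Then $\phi^*\omega$ is a closed, semi-positive $(1,1)$-form on the compact K\"ahler manifold $\widetilde M$ whose cohomology class is a positive multiple of $-c_1(\psi^*F)=0$. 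By the $\partial\bar\partial$-lemma, $\phi^*\omega=i\,\partial\bar\partial\rho$ for some smooth real $\rho$, which is plurisubharmonic by semi-positivity and thus constant by compactness of $\widetilde M$. Hence $\phi^*\omega$ vanishes identically; positivity of $\omega$ then forces $d\phi\equiv 0$, so $\phi$ is constant and $\psi^*F$ equals the constant subbundle of $\mathcal O_{\widetilde M}^{r}$ attached to some fixed linear subspace of $\mathbb C^{r}$, hence is holomorphically trivial.

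The main obstacle is the geometric triviality step in the third paragraph, where one must exploit K\"ahlerness of $\widetilde M$ in an essential way; the rest is either formal Tannakian input or a direct invocation of Theorem \ref{thm2} for the descent from $\widetilde M$ back to $M$.
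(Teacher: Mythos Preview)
Your argument is correct. The paper's own proof is a one-line reference to the method of \cite[Proposition~3.1]{BDDH}, which treats the analogous statement for $\mathcal T(M)\hookrightarrow\mathcal C_{\mathrm{dR}}(M)$; your write-up is precisely the standard way that argument goes (Tannakian surjectivity criterion of \cite[Proposition~2.21]{DMOS}, then the Grassmannian/plurisubharmonic trick to show that a $D$-stable subbundle of a trivial bundle on a compact K\"ahler manifold is itself trivial), adapted to the \'etale-trivial setting by first passing to a trivializing cover.

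Two small remarks. First, the appeal to Theorem~\ref{thm2} is unnecessary: since $\psi$ is already a finite \'etale covering, triviality of $\psi^*F$ gives \'etale triviality of $F$ by definition. Second, in the $c_1$-vanishing step you only need that $\psi^*F$ admits \emph{some} holomorphic connection (namely $\psi^*(D|_F)$); integrability of $D$ plays no role there, though it is of course needed for $(F,D|_F)$ to live in $\mathcal C_{\mathrm{dR}}(M)$ in the first place.
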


\begin{proof}
This can be proved by the same method employed to verify Proposition 3.1 in \cite{BDDH}.
\end{proof}

Let $\pi^{\rm et}(M,\,x_0)$ be the pro-finite completion of the fundamental group of $M$ at 
$x_0$; it carries a natural structure of pro-algebraic group, or affine group scheme, once 
regarded as a projective limit of finite groups. This is commonly known as the {\it \'etale 
fundamental group scheme}.

\begin{lemma}\label{lem2}
There is a natural faithfully flat morphism (same as surjective morphism)
$$
\gamma\,\,:\,\, \mathbb{E}(M,\, x_0)\, \longrightarrow\, \pi^{\rm et}(M,\, x_0)
$$
to the \'etale fundamental group scheme $\pi^{\rm et}(M,\, x_0)$.
\end{lemma}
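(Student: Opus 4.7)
The plan is to use Tannaka duality. I would construct a natural $\mathbb{C}$-linear, exact, tensor functor
\[
\Phi\,:\,\text{Rep}_{\mathbb{C}}(\pi^{\rm et}(M,\,x_0))\longrightarrow \mathcal{T}^{\rm et}(M)
\]
compatible with the fiber functor $\bullet|_{x_0}$, and then verify that $\Phi$ is fully faithful with essential image closed under the formation of subobjects in $\mathcal{T}^{\rm et}(M)$. By the standard Tannakian criterion for surjectivity \cite[Proposition 2.21]{DMOS}, this is exactly what is needed to produce a faithfully flat morphism $\gamma\,:\,\mathbb{E}(M,\,x_0)\to \pi^{\rm et}(M,\,x_0)$ that is Tannaka-dual to $\Phi$.

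To define $\Phi$, recall that any finite-dimensional representation of $\pi^{\rm et}(M,\,x_0)$ factors through a finite quotient of $\pi_1(M,\,x_0)$, and hence corresponds to a homomorphism $\rho\,:\,\pi_1(M,\,x_0)\to \text{GL}(V)$ with finite image. I would attach to $\rho$ the flat bundle $E_\rho\,=\,\widetilde M\times^{\pi_1}V$ equipped with its tautological integrable connection $D_\rho$. Since $\text{Mon}(D_\rho)\,=\,\rho$ has finite image, Lemma \ref{lem1} ensures $(E_\rho,\,D_\rho)\,\in\, \mathcal{T}^{\rm et}(M)$, and the assignment preserves $\oplus$, $\otimes$, duals and evaluation at $x_0$ on the nose (the latter being canonically $V$ itself). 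The arrow $\gamma$ is then produced via \cite[p.~130, Theorem 2.11]{DMOS}.

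For full faithfulness, the uniqueness clause of Lemma \ref{lem1} is decisive: given a horizontal morphism $(E_\rho,\,D_\rho)\to (E_\sigma,\,D_\sigma)$, pulling back along a common finite Galois cover $\psi\,:\,\widetilde M\to M$ that trivializes both bundles reduces it to a horizontal morphism between trivial connections on $\mathcal{O}_{\widetilde M}^r$, namely a constant matrix; chasing the $\text{Gal}(\psi)$--equivariance identifies it with a morphism of $\pi^{\rm et}$--representations. For the subobject property, let $(F,\,D')\,\subset\, (E_\rho,\,D_\rho)$ be a subobject in $\mathcal{T}^{\rm et}(M)$; then $F$ is by hypothesis étale trivial and $D'$ is the restriction of $D_\rho$, so its monodromy is the restriction of $\rho$ to the $\pi_1$--stable subspace $F|_{x_0}\,\subset\, V$ and thus still has finite image. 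Lemma \ref{lem1} then identifies $(F,\,D')$ with $\Phi$ applied to this subrepresentation.

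The main obstacle, which is the point that must be handled carefully, is precisely the correct exploitation of the uniqueness clause of Lemma \ref{lem1}: the integrable connection on an étale trivial bundle with finite monodromy is rigidly determined by any finite Galois cover that trivializes it, and this rigidity is what simultaneously delivers the full faithfulness of $\Phi$ and the coincidence of an abstract subobject in $\mathcal{T}^{\rm et}(M)$ with its representation-theoretic counterpart on the $\pi^{\rm et}$ side. Once this is installed, faithful flatness of $\gamma$ is an automatic consequence of \cite[Proposition 2.21]{DMOS}.
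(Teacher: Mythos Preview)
Your proposal is correct and follows essentially the same strategy as the paper: both construct a fully faithful tensor functor into $\mathcal T^{\rm et}(M)$ whose essential image is closed under subobjects, the key step in each case being that after pull-back along a common finite Galois cover the connections become trivial and morphisms reduce to $\mathrm{Gal}(\psi)$--equivariant constant matrices. The only cosmetic difference is that the paper phrases the source category as \'etale trivial bundles (equipped with the canonical connection of Lemma~\ref{lem1}) rather than as $\mathrm{Rep}_{\mathbb C}(\pi^{\rm et}(M,x_0))$, and leaves the invocation of the Tannakian surjectivity criterion implicit where you cite it explicitly.
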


\begin{proof}
The \'etale fundamental group $\pi^{\rm et}(M,\, x_0)$ corresponds to the neutral Tannakian category defined by the \'etale
trivial
holomorphic vector bundles on $M$. In the proof of Lemma \ref{lem1} it was shown that any \'etale trivial vector
bundle has a canonical integrable holomorphic connection whose monodromy is finite. Any homomorphism between
two \'etale trivial holomorphic vector bundles is flat with respect to this canonical connection. 
Indeed, write $\mathrm d_E$
for the canonical connection on an \'etale trivial bundle $E$. Then, if $\psi: \widetilde{M}\longrightarrow M$ is a finite \'etale covering such that $\psi^*E\simeq\mathcal O_{\widetilde M}^r$, then the pulled-back connection $\psi^*\mathrm d_E$ corresponds to the trivial connection on $\mathcal O_{\widetilde M}^r$. (Note that this notion is independent of the isomorphism $\psi^*E\simeq\mathcal O_{\widetilde M}^r$.)
This being so, if $f\,:\,E\, \longrightarrow\, F$ is an arrow between vector bundles which become trivial on the \'etale covering $\psi:\widetilde{M}\longrightarrow M$, then the arrow $\psi^*f$ is also horizontal with respect to the connections $\psi^*(\mathrm d_E)$ and $\psi^*(\mathrm d_F)$. Consequently, surjectivity of $\psi$ assures that $h$ is horizontal. 

The lemma
is a straightforward consequence of these.
\end{proof}

The group of multiplicative characters $G\, \longrightarrow\, {\mathbb G}_m=\mathbb C^*$ of an 
affine group scheme $G$ will be denoted by ${\mathbb X}(G)$.

Let
\begin{equation}\label{e2}
\gamma^*\,\, :\,\, {\mathbb X}(\pi^{\rm et}(M,\, x_0))\,\, \longrightarrow\,\, {\mathbb X}(\mathbb{E}(M,\, x_0))
\end{equation}
be the homomorphism induced by $\gamma$ in Lemma \ref{lem2}.

\begin{proposition}\label{prop1}
The image of the homomorphism $\gamma$ in \eqref{e2} is the subgroup consisting of all characters of finite order.
\end{proposition}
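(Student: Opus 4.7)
The plan is to translate everything into a statement about rank-one objects in the Tannakian categories behind $\mathbb E(M,\,x_0)$ and $\pi^{\rm et}(M,\,x_0)$, and then exploit the uniqueness of the canonical connection on an \'etale trivial bundle supplied by Lemma \ref{lem1}. First I would identify $\mathbb X(\mathbb E(M,\,x_0))$ with the set of isomorphism classes of rank-one objects $(L,\,D)$ of $\mathcal T^{\rm et}(M)$, where $L$ is an \'etale trivial holomorphic line bundle and $D$ a holomorphic connection on it, and similarly identify $\mathbb X(\pi^{\rm et}(M,\,x_0))$ with the isomorphism classes of rank-one objects of the Tannakian category defining $\pi^{\rm et}(M,\,x_0)$, which via Lemma \ref{lem1} can be written as pairs $(L,\,\mathrm d_L)$ with $L$ \'etale trivial and $\mathrm d_L$ its canonical finite-monodromy connection. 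Under these identifications $\gamma^*$ is induced by the tautological map $L\,\longmapsto\,(L,\,\mathrm d_L)$.

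For the easy inclusion I would note that every morphism of group schemes from the pro-finite group scheme $\pi^{\rm et}(M,\,x_0)$ to $\mathbb G_m$ factors through some finite quotient of $\pi^{\rm et}(M,\,x_0)$; hence $\mathbb X(\pi^{\rm et}(M,\,x_0))$ consists entirely of characters of finite order, and the image of $\gamma^*$ is automatically contained in the subgroup of finite-order elements of $\mathbb X(\mathbb E(M,\,x_0))$.

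For the reverse inclusion, take $\chi\,\in\,\mathbb X(\mathbb E(M,\,x_0))$ of finite order $n$ and let $(L,\,D)$ be the rank-one object of $\mathcal T^{\rm et}(M)$ corresponding to $\chi$. The relation $\chi^n\,=\,1$ translates into an isomorphism $(L,\,D)^{\otimes n}\,\simeq\,(\mathcal O_M,\,\mathrm d)$ in $\mathcal T^{\rm et}(M)$, from which it follows that $\mathrm{Mon}(D)^n\,=\,1$; so the monodromy of $D$ takes values in $\mu_n\,\subset\,\mathbb C^*$ and is in particular finite. By the uniqueness clause of Lemma \ref{lem1}, $D$ must then coincide with the canonical connection $\mathrm d_L$ on the \'etale trivial line bundle $L$. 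Since $\mathrm{Mon}(\mathrm d_L)$ has finite image, it factors through $\pi^{\rm et}(M,\,x_0)$, defining a character whose image under $\gamma^*$ is precisely $\chi$.

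The most delicate point is the step where, from the mere fact that $(L,\,D)^{\otimes n}$ is trivial in $\mathcal T^{\rm et}(M)$, one has to conclude that $D$ itself is the canonical connection $\mathrm d_L$. Everything hinges on the uniqueness clause of Lemma \ref{lem1}: an \'etale trivial bundle carries exactly one holomorphic connection with finite-image monodromy. Once this is acknowledged, the rest of the argument is a careful unpacking of the Tannakian dictionary, and no further analytic input is needed.
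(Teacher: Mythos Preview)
Your approach is the same as the paper's: identify characters with rank-one objects, dispose of the easy inclusion by noting that $\pi^{\rm et}(M,x_0)$ is pro-finite, and for the hard inclusion argue that a finite-order rank-one object $(L,D)$ has finite monodromy and hence coincides with the canonical connection $\mathrm d_L$ of Lemma~\ref{lem1}.

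There is, however, one step you skip that the paper makes explicit. In the definition of $\mathcal T^{\rm et}(M)$ the connection $D$ is merely holomorphic, not assumed integrable; so writing ``$\mathrm{Mon}(D)^n=1$'' presupposes that $D$ is flat. The paper inserts a one-line curvature argument here: if $(L,D)^{\otimes n}\simeq(\mathcal O_M,\mathrm d)$, then the curvature of the induced connection on $L^{\otimes n}$, which is $n$ times the curvature of $D$, vanishes; hence $D$ is integrable and the monodromy is defined. Once this is said, your deduction that $\mathrm{Mon}(D)$ lands in $\mu_n$ is correct.

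A second, smaller point: your closing sentence asserts that ``an \'etale trivial bundle carries exactly one holomorphic connection with finite-image monodromy''. As stated for arbitrary rank this is false (already $\mathcal O_M^2$ can support many finite-monodromy connections). What you need, and what Lemma~\ref{lem1} actually gives, is the rank-one statement: if $\psi$ is a finite Galois cover with $\psi^*L$ trivial and $\psi^*D$ corresponding to the trivial connection under some trivialisation, then $D=\mathrm d_L$. For line bundles any two trivialisations differ by a global unit, i.e.\ a constant, so the characterising property in Lemma~\ref{lem1} is independent of the choice of $h$ and your appeal to uniqueness is legitimate. Just phrase it for line bundles rather than for all \'etale trivial bundles.
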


\begin{proof}
Any character in ${\mathbb X}(\pi^{\rm et}(M,\, x_0))$ is of finite order, and hence the image of $\gamma^*$ is
contained in the subgroup of ${\mathbb X}(\mathbb{E}(M,\, x_0))$ consisting of all characters of finite order.

Let $L$ be an \'etale trivial line bundle on $M$ and $D$ a holomorphic connection on $L$. If the curvature of
$D$ is $R\, \in\, H^0(M,\, \Omega^2_M)$, then the curvature of the connection on $L^{\otimes n}$ induced by
$D$ is $n\cdot R$. Therefore, if $(L,\, D)$ corresponds to a multiplicative character of $\mathbb{E}(M,\, x_0)$
of finite order, then $D$ is integrable.

To prove the proposition we need to show the following: Let $D$ be an integrable holomorphic connection on
an \'etale trivial line bundle $L$ on $M$. If the monodromy homomorphism for $D$ has finite image, then $D$ is the
canonical connection on $L$ constructed in Lemma \ref{lem1}.

Since the monodromy homomorphism for $D$ has finite image, there is a connected \'etale Galois covering
$$
\psi\,\,:\,\, \widetilde{M}\,\,\longrightarrow\,\, M
$$
such that $\psi^*L$ is the trivial holomorphic line bundle and $\psi^*D$ is the trivial connection
on $\psi^*L$. From the construction of the connection in the proof of Lemma \ref{lem1} it follows immediately
that $D$ coincides with the canonical connection on $L$ in Lemma \ref{lem1}. This completes the proof.
\end{proof}

Let us end this section by rendering explicit the group of {\it additive characters} of $\mathbb E(M,\,x_0)$; this is a key observation in \cite{BDDH}. The natural functor $\mathcal T(M)\longrightarrow\mathcal T^{\rm et}(M)$ defines a morphism of group schemes 
\[\mathbf{p}\,\,:\,\,
 \mathbb E(M,\,x_0)\,\longrightarrow\,\Theta(M,\,x_0)
 \]
 which must be automatically surjective in view of the fact that
$\mathbf q\,:\,\varpi(M,\,x_0)\,\longrightarrow\,\Theta(M,\,x_0)$ is. For an affine group scheme, let $\mathbb X_a(G)$ stand for the group of homomorphisms $G\,\longrightarrow\,\mathbb G_a$. We then obtain an injection 
 \begin{equation}\label{18.10.2022--2}
 \mathbb X_a(\Theta(M,\,x_0))\,\longrightarrow\,{\mathbb X}_a({\mathbb E}(M,\,x_0))
 \end{equation}
 
\begin{lemma}\label{19.10.2022--1}The arrow in eq. \eqref{18.10.2022--2} is a bijection. In particular, the group of additive characters of $\mathbb E(M,\,x_0)$ is isomorphic
to $H^0(M,\,\Omega_M^1)$. 
\end{lemma}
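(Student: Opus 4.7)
My strategy is to recast both character groups as first Yoneda extensions. Recall that for any affine group scheme $G$ over $\mathbb C$, additive characters $\chi\,:\, G\,\longrightarrow\,\mathbb G_a$ are in bijection with isomorphism classes of extensions $0\,\to\,\mathbf 1\,\to\, V\,\to\,\mathbf 1\,\to\, 0$ of finite-dimensional representations, via $\chi\,\longmapsto\,\bigl(g\mapsto\bigl(\begin{smallmatrix}1&\chi(g)\\0&1\end{smallmatrix}\bigr)\bigr)$. Under the Tannakian equivalences for $\Theta(M,\,x_0)$ and $\mathbb{E}(M,\,x_0)$, the arrow \eqref{18.10.2022--2} becomes the natural comparison map
\[
\mathrm{Ext}^1_{\mathcal T(M)}\bigl((\mathcal O_M,\,d),\,(\mathcal O_M,\,d)\bigr)\,\longrightarrow\,\mathrm{Ext}^1_{\mathcal T^{\mathrm{et}}(M)}\bigl((\mathcal O_M,\,d),\,(\mathcal O_M,\,d)\bigr)
\]
induced by the inclusion of categories $\mathcal T(M)\,\hookrightarrow\,\mathcal T^{\mathrm{et}}(M)$. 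Since injectivity has already been recorded, the entire content is surjectivity: given an extension $0\,\to\,(\mathcal O_M,\,d)\,\to\,(E,\,D)\,\to\,(\mathcal O_M,\,d)\,\to\, 0$ in $\mathcal T^{\mathrm{et}}(M)$, one must show that the underlying bundle $E$ is already holomorphically trivial, so that the extension lifts to $\mathcal T(M)$.

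The isomorphism class of the underlying bundle extension is captured by some $\alpha\,\in\, H^1(M,\,\mathcal O_M)$, and the aim is to prove $\alpha\,=\,0$. By Lemma \ref{lem1}, the hypothesis that $E$ is \'etale trivial provides a finite Galois \'etale covering $\psi\,:\,\widetilde M\,\longrightarrow\, M$ with $\psi^*E\,\simeq\,\mathcal O_{\widetilde M}^{\,2}$, hence $\psi^*\alpha\,=\,0$. The crucial step, which I expect to be the main piece to articulate cleanly, is the injectivity of the pullback
\[
\psi^*\,:\,H^1(M,\,\mathcal O_M)\,\longrightarrow\, H^1(\widetilde M,\,\mathcal O_{\widetilde M}).
\]
Since $\psi$ is finite of some degree $n$, we have $R^i\psi_*\mathcal O_{\widetilde M}\,=\,0$ for $i\,\geq\, 1$, so $H^1(\widetilde M,\,\mathcal O_{\widetilde M})\,\simeq\, H^1(M,\,\psi_*\mathcal O_{\widetilde M})$. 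Moreover, \'etaleness of $\psi$ makes available a trace map $\mathrm{Tr}_\psi\,:\,\psi_*\mathcal O_{\widetilde M}\,\longrightarrow\,\mathcal O_M$ satisfying $\mathrm{Tr}_\psi\circ\iota\,=\, n\cdot\mathrm{Id}_{\mathcal O_M}$ for the canonical inclusion $\iota\,:\,\mathcal O_M\,\hookrightarrow\,\psi_*\mathcal O_{\widetilde M}$, exhibiting $\mathcal O_M$ as a direct summand of $\psi_*\mathcal O_{\widetilde M}$. Hence $\psi^*$ is the split injection $H^1(\iota)$, forcing $\alpha\,=\,0$, and therefore $E\,\simeq\,\mathcal O_M^{\,2}$.

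With bijectivity of \eqref{18.10.2022--2} established, the identification $\mathbb X_a(\mathbb{E}(M,\,x_0))\,\simeq\, H^0(M,\,\Omega_M^1)$ reduces to the same computation for $\Theta(M,\,x_0)$. An extension of $(\mathcal O_M,\,d)$ by itself in $\mathcal T(M)$ is necessarily of the form $\bigl(\mathcal O_M^{\,2},\,d+\bigl(\begin{smallmatrix}0&\omega\\ 0&0\end{smallmatrix}\bigr)\bigr)$ with $\omega\,\in\, H^0(M,\,\Omega_M^1)$ (integrability $d\omega\,=\,0$ being automatic since every holomorphic $1$-form on a compact K\"ahler manifold is closed), and two such are equivalent precisely when the $1$-forms coincide: any gauge change of the form $\bigl(\begin{smallmatrix}1&f\\ 0&1\end{smallmatrix}\bigr)$ requires $f$ to be a global holomorphic function on $M$, hence constant by compactness, so $df\,=\,0$. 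This recovers the ``key observation'' of \cite{BDDH} and completes the proof.
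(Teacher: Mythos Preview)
Your proof is correct but follows a different path from the paper's. Both arguments reduce to showing that in any extension $0\to(\mathcal O_M,d)\to(E,D)\to(\mathcal O_M,d)\to 0$ in $\mathcal T^{\rm et}(M)$, the bundle $E$ is already trivial. The paper equips $E$ with its \emph{canonical} connection $\mathrm d_E$ from Lemma \ref{lem1}, observes that the extension is still horizontal for $\mathrm d_E$, and then notes that the monodromy of $\mathrm d_E$ is simultaneously finite and contained in the strict upper-triangular unipotent group $\mathbb G_a\subset\mathrm{GL}_2$; since $\mathbb C$ has no nontrivial finite subgroups, the monodromy is trivial and $E\simeq\mathcal O_M^2$. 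Your route instead tracks the underlying extension class $\alpha\in H^1(M,\mathcal O_M)$ and uses the trace splitting $\mathcal O_M\hookrightarrow\psi_*\mathcal O_{\widetilde M}$ to get injectivity of $\psi^*$ on $H^1$, forcing $\alpha=0$. This is more sheaf-theoretic and sidesteps the canonical connection altogether, while the paper's argument is representation-theoretic and leans on the structure set up in Lemma \ref{lem1}; both are short and natural. One small point you leave implicit: the implication ``$\psi^*E\simeq\mathcal O_{\widetilde M}^{\,2}\Rightarrow\psi^*\alpha=0$'' is not purely formal (triviality of the middle term need not by itself split an extension), but it does hold here because $\widetilde M$ is compact connected, so $H^0(\psi^*E)\to H^0(\mathcal O_{\widetilde M})$ is surjective by a dimension count and a global section furnishes the splitting. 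For the final identification with $H^0(M,\Omega_M^1)$ the paper simply cites \cite[Lemma 3.5]{BDDH}, whereas you reprove it directly; your computation is fine.
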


\begin{proof}Let $G$ be an affine group scheme. We first note that $\mathbb X_a(G)$ can be canonically identified with the group $\mathrm{Ext}^1_G( \mathbb C, \mathbb C)$ of extensions of the trivial representation by itself. On the side of vector bundles, we are then required to show that given an extension 
\[0\,\longrightarrow\, (\mathcal O_M,\, \mathrm d)\,\longrightarrow \,(E,\,D)
\,\longrightarrow\, (\mathcal O_M,\,\mathrm d)\,\longrightarrow\,0,\]
where $E\in\mathcal T^{\rm et}(M)$, we must immediately have $E\,\simeq\, \mathcal O_M^2$. Let
$\psi\,:\,{\widetilde M}\,\longrightarrow\, M$ be the an \'etale Galois covering with group $G$ trivializing $E$ and let $y_0\in \psi^{-1}(x_0)$.
Giving $E$ the canonical connection (see Lemma \ref{lem1}), call it $\mathrm d_E$, the sequence 
\[1\,\longrightarrow\,(\mathcal O_M,\,\mathrm d)\,\longrightarrow\, (E,\,{\rm d}_E)\,
\longrightarrow\,(\mathcal O_M,\,\mathrm d)\,\longrightarrow\,0\]
is also exact. Since $\psi^*E$ is trivial, it is the case that
$$\pi_1({\widetilde M},\,y_0)\,\longrightarrow\,
 \mathrm{GL}(E|_{x_0})
\,=\,\mathrm{GL}_2(\mathbb C)$$ is trivial and hence $\pi_1(M,\,x_0)
\,\longrightarrow\, \mathrm{GL}_2(\mathbb C)$ factors through the finite quotient $G$. Now $\mathrm{Mon} ({\rm d}_E)$ has image in the subgroup of strict upper triangular matrices, which is isomorphic to $\mathbb C$. Since $\mathbb C$ does not have any finite subgroup other than the trivial, we conclude that $\mathrm{Mon} (\mathrm{d}_E)$ is trivial. Hence, $E$ is holomorphically trivial. 

The last claim follows from \cite[Lemma 3.5]{BDDH}. 
\end{proof}
 
\subsection{The special case of Riemann surfaces}

In this subsection assume that $\dim_{\mathbb C} M\,=\, 1$. Since $\Omega^2_M\,=\, 0$, any 
holomorphic connection on $M$ is integrable,

Let ${\mathcal C}_{B}(M)$ denote the category whose objects are all finite dimensional complex representations
of the topological fundamental group $\pi_1(M,\, x_0)$. Sending a holomorphic connection $(E,\, D)$ to its
monodromy homomorphism $\pi_1(M,\, x_0)\, \longrightarrow\, \text{GL}(E \vert_{x_0})$ we obtain an equivalence
between the two categories ${\mathcal C}_{\mathrm{dR}}(M)$ and ${\mathcal C}_{B}(M)$.
Using the tautological fiber functor ${\mathcal C}_{B}(M)$ defines a neutral Tannakian category. In view of
the above equivalence of categories, the
pro-algebraic affine group scheme over $\mathbb C$ corresponding to ${\mathcal C}_B(X)$
is the group scheme $\varpi(X,\, x_0)$ \cite[p.~69, Lemma 6.1]{Si}. In other words, $\varpi(M,\, x_0)$ is the
pro-algebraic completion of $\pi_1(M,\, x_0)$.

Let $N$ be a compact connected Riemann surface and 
\begin{equation}\label{e4}
\phi\,:\, M\, \longrightarrow\, N
\end{equation}
an orientation preserving map. The corresponding homomorphism of
fundamental groups
$$\phi_*\,:\, \pi_1(M,\, x_0)\, \longrightarrow\, \pi_1(N,\, \phi(x_0))$$
produces a homomorphism
\begin{equation}\label{e5}
\phi_*\,:\, \varpi(M,\, x_0)\, \longrightarrow\, \varpi(N,\, \phi(x_0))
\end{equation}
between the pro-algebraic completions.

\begin{proposition}\label{prop2}
Take any $\phi$ as in \eqref{e4} which is a homeomorphism. If the homomorphism $\phi_*$ is \eqref{e5}
descends to a homomorphism
$$
\mathbb{E}(M,\, x_0)\,\, \longrightarrow\, \, \mathbb{E}(N,\, \phi(x_0))\, .
$$
of quotients (see Lemma \ref{lem3}), then the two Riemann surfaces $M$ and $N$ are isomorphic.
\end{proposition}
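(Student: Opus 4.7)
The plan is to translate the descent condition on group schemes into a statement about Hodge structures on $H^1$: namely, that the topological pullback $\phi^*$ preserves the Hodge filtration. Combined with the fact that $\phi$ is an orientation-preserving homeomorphism, this yields an isomorphism of polarized Hodge structures on $H^1$, from which a classical Torelli argument produces an isomorphism $M\cong N$ of Riemann surfaces.

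First I would apply the contravariant functor $\mathbb{X}_a(-)$. Since $\varpi(M,x_0)$ is the pro-algebraic completion of $\pi_1(M,x_0)$, we have
\[
\mathbb{X}_a(\varpi(M,x_0)) \,=\, \mathrm{Hom}(\pi_1(M,x_0),\mathbb{C})\,=\,H^1(M,\mathbb{C}).
\]
By Lemma \ref{19.10.2022--1} combined with the surjection $\mathbf{r}$ of \eqref{e3}, the inclusion $\mathbb{X}_a(\mathbb{E}(M,x_0))\hookrightarrow\mathbb{X}_a(\varpi(M,x_0))$ is identified with $H^0(M,\Omega_M^1)\hookrightarrow H^1(M,\mathbb{C})$. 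An inspection of the proof of \cite[Lemma 3.5]{BDDH}, together with the explicit description of the canonical connection on an \'etale trivial bundle of Lemma \ref{lem1}, shows this embedding to be precisely the Hodge-theoretic inclusion $\omega\longmapsto(\gamma\mapsto\int_\gamma\omega)$.

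Next, the hypothesis that $\phi_*$ descends to a morphism $\mathbb{E}(M,x_0)\to\mathbb{E}(N,\phi(x_0))$ gives, upon applying $\mathbb{X}_a(-)$, a commutative square whose bottom row is the pullback $\phi^*\colon H^1(N,\mathbb{C})\to H^1(M,\mathbb{C})$ and whose top row sends $H^0(N,\Omega_N^1)$ into $H^0(M,\Omega_M^1)$. Since $\phi$ is a homeomorphism, the bottom row is an isomorphism; since the two spaces of holomorphic $1$-forms have the same $\mathbb{C}$-dimension (the common genus, which is a topological invariant), the top row is an isomorphism as well. Complex conjugating and using that $\phi^*$ is defined over $\mathbb{R}$ on $H^1(-,\mathbb{R})$, one also obtains $\phi^*(\overline{H^0(N,\Omega_N^1)})=\overline{H^0(M,\Omega_M^1)}$, so the full Hodge decomposition is preserved. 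Since $\phi$ is an orientation-preserving homeomorphism, $\phi^*$ induces a $\mathbb{Z}$-linear isomorphism $H^1(N,\mathbb{Z})\to H^1(M,\mathbb{Z})$ carrying the cup-product intersection form on $N$ to that on $M$. Combining these remarks, $\phi^*$ is an isomorphism of polarized integral Hodge structures of weight one; equivalently, it induces an isomorphism of principally polarized Jacobians $(J(N),\Theta_N)\xrightarrow{\sim}(J(M),\Theta_M)$. In genus $0$ both curves are $\mathbb{P}^1$; in genus $1$ the curve coincides with its Jacobian; in genus $\geq 2$ the classical Torelli theorem provides an isomorphism $M\cong N$ of Riemann surfaces, completing the proof.

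The principal obstacle I foresee lies in the very first step: one must carefully verify that the canonical embedding $\mathbb{X}_a(\mathbb{E}(M,x_0))\hookrightarrow\mathbb{X}_a(\varpi(M,x_0))$ coincides with the period map $\omega\mapsto\int\omega$. This requires unwinding the correspondence between additive characters and self-extensions of the trivial representation (as used already in the proof of Lemma \ref{19.10.2022--1}), the construction of the canonical connection of Lemma \ref{lem1}, and the explicit form of the isomorphism of \cite[Lemma 3.5]{BDDH}. Once this compatibility is in hand, every subsequent step is either purely functorial or a standard application of Torelli.
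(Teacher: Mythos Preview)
Your proposal is correct and follows essentially the same approach as the paper: use Lemma~\ref{19.10.2022--1} to convert the descent hypothesis into the statement that $\phi^*$ carries $H^0(N,\Omega_N^1)$ into $H^0(M,\Omega_M^1)$ compatibly with the topological isomorphism on $H^1$, and then invoke Torelli. The paper simply refers to \cite[Theorem~4.1]{BDDH} for the Torelli endgame that you spell out explicitly.
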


\begin{proof}
Its proof is very similar to the proof of Theorem 4.1 of \cite[p.~1050]{BDDH}; the key point is the existence of an arrow
$\mathbb{E}(M,\, x_0)\,\, \longrightarrow\, \, \mathbb{E}(N,\, \phi(x_0))$ which then assures, through Lemma \ref{19.10.2022--1}, the existence of an arrow $H^0(N,
\, \Omega_N^1)\,\longrightarrow\, H^0(M,\,\Omega_M^1)$ rendering 
\[
\xymatrix{H^1_{\rm dR}(N,\,\mathbb C)\ar[rr]^{\phi^*}&&H^1_{\rm dR}(N,\,\mathbb C)
\\
H^0(N,\,\Omega_N^1)\ar[u]\ar[rr]&&H^0(M,\,\Omega_M^1)\ar[u]}
\]
commutative. 
\end{proof}

\section{Holonomy of connection on trivial bundle} \label{Sect Trivial bundle}

Let $M$ be a compact connected complex manifold and $G$ a complex connected Lie group. The
Lie algebra of $G$ will be denoted by $\mathfrak g$. Consider 
the trivial holomorphic principal $G$--bundle
$$E\, :=\, M\times G \, \longrightarrow\, M$$
on $M$. Take a holomorphic connection $D$ on $E$. Denoting the trivial connection on $E$ by $D_0$, we have
$$
D-D_0\,\,\in\,\, H^0(M,\, \Omega^1_M\otimes_{\mathbb C}{\mathfrak g})\,=\, 
H^0(M,\, \Omega^1_M)\otimes_{\mathbb C}{\mathfrak g}\, .
$$
Let
$$
{\mathbb D}\,\,:\,\, H^0(M,\, \Omega^1_M)^*\,\, \longrightarrow\,\,\mathfrak g
$$
be the homomorphism given by $D-D_0$. The complex Lie subalgebra of $\mathfrak g$ generated by the image of
the homomorphism $\mathbb D$ will be denoted by ${\mathfrak g}_D$. Let $G_D$ denote the unique smallest
connected closed complex Lie subgroup of $G$ such that the Lie algebra of $G_D$ contains ${\mathfrak g}_D$.

Consider the holomorphic reduction of structure group of $E$
\begin{equation}\label{c1}
E_D\,:=\, M\times G_D \, \subset\, M\times G \,=\, E
\end{equation}
to the subgroup $G_D$.

\begin{proposition}\label{prop-h}
The connection $D$ preserves $E_D$ in \eqref{c1} (equivalently, $D$ is induced by a 
connection on $E_D$).

If $E_H\, \subset\, E$ is a holomorphic reduction of the structure group of $E$ to a 
complex Lie subgroup $H\, \subset\, G$ such that
\begin{itemize}
\item the connection $D$ preserves $E_H$, and

\item there is a point $x_0\, \in\, M$ such that $(x_0,\, e)\, \in\, E_H$, where $e\, \in\, G$ is the
identity element,
\end{itemize}
then $G_D\, \subset\, H$ and $E_D\, \subset\, E_H$.
\end{proposition}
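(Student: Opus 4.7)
The first assertion is a direct verification. Writing $\theta := D - D_0 \in H^0(M, \Omega^1_M) \otimes \mathfrak{g}$, for every $x \in M$ and $v \in T_x M$ one has $\theta_x(v) = \mathbb{D}(\mathrm{ev}_{x, v})$, where $\mathrm{ev}_{x, v} \in H^0(M, \Omega^1_M)^*$ is the linear functional $\omega \mapsto \omega_x(v)$. Hence $\theta_x(v)$ lies in the image of $\mathbb{D}$, and therefore in $\mathfrak{g}_D = \mathrm{Lie}(G_D)$. In a right-invariant trivialisation of $TG$, the horizontal lift of $v$ at any $(x, g) \in E_D = M \times G_D$ has the form $(v, -R_{g*} \theta_x(v))$, which lies in $T_x M \oplus T_g G_D$. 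So the horizontal distribution of $D$ is tangent to $E_D$ along $E_D$, and $D$ restricts to a connection on $E_D$.

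For the second assertion, my plan is to prove that $\theta$ takes values in $\mathrm{Lie}(H)$; once this is known, $\mathfrak{g}_D \subset \mathrm{Lie}(H)$ gives $G_D \subset H$, and the inclusion $E_D \subset E_H$ follows because $(x_0, e) \in E_H$ forces the fibre $E_{H, x_0}$ to equal $H$, while the value-in-$\mathrm{Lie}(H)$ property of $\theta$ forces the classifying section $s_{\mathrm{red}} : M \to G/H$ of $E_H$ to be constant at $[e]$, so that $E_H = M \times H$. To prove $\theta \in \Omega^1(M, \mathrm{Lie}(H))$, I would combine a local observation with a monodromy argument. Locally, a holomorphic section $\sigma : U \to G$ of $E_H$ near $x_0$ with $\sigma(x_0) = e$ converts $D$-preservation of $E_H$ into $\theta' := \mathrm{Ad}(\sigma^{-1}) \theta + \sigma^{-1} d\sigma \in \Omega^1(U, \mathrm{Lie}(H))$. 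Globally, under flatness of $D$ (automatic when $\dim_{\mathbb{C}} M = 1$, and an implicit hypothesis otherwise), the parallel transport of $e$ on the universal cover $\pi : \widetilde{M} \to M$ gives a holomorphic map $\widetilde{s} : \widetilde{M} \to G$ that preservation of $E_H$ forces into $\pi^* E_H$. Specialising at points of the orbit $\pi_1(M, x_0) \cdot \widetilde{x}_0$ yields $\rho(\pi_1(M, x_0)) \subset H$, where $\rho$ is the monodromy of $D$; closedness of $H$ gives $\overline{\rho(\pi_1(M, x_0))} \subset H$.

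The heart of the argument, and its main obstacle, is the identification $\overline{\rho(\pi_1(M, x_0))} = G_D$ inside $G$. One inclusion is immediate from the first assertion: $\theta$ takes values in $\mathfrak{g}_D$, so the parallel transport $\widetilde{s}$ stays in $G_D$ and $\rho(\pi_1) \subset G_D$. For the reverse inclusion I would use compactness of $M$: in the K\"ahler case, Hodge theory implies that the image of $H_1(M, \mathbb{Z})$ in $H^0(M, \Omega^1_M)^*$ under integration of holomorphic $1$-forms is a $\mathbb{Z}$-lattice of full real rank, so applying $\mathbb{D}$ yields a subgroup of $\mathfrak{g}_D$ dense as a real subspace, whose exponentials close up to $G_D$. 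Combined with the previous step, this yields $G_D = \overline{\rho(\pi_1)} \subset H$. For a general (possibly non-K\"ahler) compact complex $M$, or to avoid flatness altogether, one should instead use a Tannakian / formal-jet argument centred at $x_0$, in the spirit of the algebraic treatment of \cite{biswas-hai-dos_santos22}.
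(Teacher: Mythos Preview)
Your treatment of the first assertion is correct and is exactly the argument the paper has in mind: since $\theta=D-D_0$ takes all its values in the image of $\mathbb D$, hence in $\mathfrak g_D=\mathrm{Lie}(G_D)$, the horizontal distribution of $D$ is tangent to $M\times G_D$.

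For the second assertion, your strategy contains a genuine gap. The crux of your argument is the identification $\overline{\rho(\pi_1(M,x_0))}=G_D$, and this is simply false. Take $M$ an elliptic curve, $G=\mathbb C$ (additive), and $\theta=\omega$ a nonzero holomorphic $1$-form; then $\mathfrak g_D=\mathbb C$, $G_D=\mathbb C$, while $\rho(\pi_1)=\{\int_\gamma\omega:\gamma\in H_1(M,\mathbb Z)\}$ is a rank-two lattice, already closed and certainly not equal to $\mathbb C$. Your Hodge-theoretic sketch does not rescue this: the fact that the period lattice has full real rank in $H^0(M,\Omega^1_M)^*$ only says that the \emph{real span} of $\mathbb D(\Lambda)$ is $\mathrm{image}(\mathbb D)$, not that $\mathbb D(\Lambda)$ is dense there; and in any case the monodromy is a path-ordered exponential, not $\exp\!\big(\mathbb D(\text{period})\big)$, once the $X_i$ fail to commute. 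What one actually needs is not the topological closure of $\rho(\pi_1)$ but the smallest \emph{closed complex Lie subgroup} of $G$ containing it, and showing that this coincides with $G_D$ is precisely the content of the proposition --- so the route through $\overline{\rho(\pi_1)}$ is both incorrect and, once corrected, circular. Your reliance on flatness is a further restriction not present in the statement.

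The paper, for its part, dismisses the second assertion as ``equally straightforward'' and gives no details; so there is no detailed argument to compare against. A correct proof should work directly with the classifying section $\bar s:M\to G/H$ and the parallelism equation $d\bar s=-[\theta]_{\bar s}$, using that $H$ is a closed \emph{complex} Lie subgroup together with compactness of $M$, rather than passing through the Euclidean closure of the monodromy image.
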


\begin{proof}
Since ${\mathbb D}(H^0(M,\, \Omega^1_M)^*))\, \subset\, {\mathfrak g}_D\, \subset\, \text{Lie}(G_D)$, it
follows that the connection $D$ preserves the reduction $E_D$ in \eqref{c1}. The second statement is
equally straightforward.
\end{proof}

\section{Expressing $\varpi\longrightarrow\Theta$ with the aid of iterated 
integrals}\label{Sect iterated}

Let $M$ be a complex compact K\"ahler manifold and $x_0$ a point of it. Recall that $\varpi\,=\,\varpi(M,\,x_0)$ (respectively,
$\Theta\,=\,\Theta(M,\,x_0)$) 
is the Tannakian group associated to the category of integrable holomorphic connections ${\mathcal C}_{\mathrm{dR}}(M)$ (respectively,
integrable holomorphic connections on trivial vector bundles ${\mathcal T}(M)$) over $M$. And our 
objective in this section is to render the arrow $\mathbf q$ of eq. \eqref{17.10.2022--1} more explicit by using {\it iterated integrals}, therefore linking it to other works like 
\cite{parshin} and \cite{hain86}.

It is showed in \cite{biswas-hai-dos_santos22} that $\Theta$ is the Tannakian envelope of a co-commutative Hopf algebra ${\mathfrak A}$ 
constructed via the free algebra on the vector space $H^0(M,\, \Omega_M^1)^*$. Constructing ${\mathfrak A}$ is quite simple: letting 
$\{\beta_i\}$ be a basis of $H^0(M,\, \Omega_M^2)$, define \[B_i\,\in\, H^0(M,\, \Omega_M^1)^*\otimes 
H^0(M,\, \Omega_M^1)^*,\quad\omega\otimes\omega'\,\longmapsto\,\beta_i(\omega\wedge\omega').\] Then ${\mathfrak A}\,=\,\mathbb 
T(H^0(M,\, \Omega_M^1)^*)/(B_i)$. See \cite[Section 2]{biswas-hai-dos_santos22} for details about ${\mathfrak A}$. Concerning the relation 
between $\Theta$ and ${\mathfrak A}$, we can say, in a nutshell, that the restricted dual of ${\mathfrak A}$ \cite[VI]{sweedler69}, 
\[{\mathfrak A}^\circ\,=\,\varinjlim ({\mathfrak A}/I)^* ,\quad \text{$I$ an ideal of finite codimension},\] is the $\mathbb{C}$--algebra of 
an affine group scheme isomorphic to $\Theta$. In fact, we have natural tensor equivalences
\[
\comodules{{\mathfrak A}^\circ}\,\,\stackrel{\sim}{\longrightarrow}\,\,
\modules{\mathfrak A}\,\,\stackrel{\mathcal{V}}{\longrightarrow}\,\,
{\mathcal T}(M),
\]
where $\mathcal V$ is constructed as follows (see \cite[Section 3]{biswas-hai-dos_santos22}
for more). Given $$f\,:\,{\mathfrak A} \,\longrightarrow\, \mathrm{End}(E)$$ an object of
$\modules{\mathfrak A}$, we associate to it the connection $\mathrm d_f$ on $\mathcal 
O_M\otimes E$ defined by the $\mathrm {End}(E)$--valued form derived from the composition
\begin{equation}\label{16.09.2022--1}
H^0(M,\, \Omega_M^1)^*\,\longrightarrow\, {\mathfrak A}\,\longrightarrow\,\mathrm{End}(E).
\end{equation}
Under this equivalence, the forgetful functor is naturally 
isomorphic to $\bullet|_{x_0}$.

Now, ${\mathcal C}_{\mathrm{dR}}(M)$ is also equivalent to the category of representations of the topological fundamental group 
$\Gamma\,=\,\pi_1(M,\,x_0)$ by means of the monodromy, and the latter category can be shown to be isomorphic to the category of ``continuous'' 
modules over a certain topological $\mathbb C$--algebra, $\mathbb C\Gamma\,\widehat{\,\,}$, associated to $\mathbb C\Gamma$. Thus, in 
order to render $\mathbf q$ explicit, we shall elaborate on the expression of the monodromy representation for objects in ${\mathcal 
T}(M)$.

\subsection{Parallel transport and monodromy}

Let $\Gamma$ be the fundamental group of $M$ based at $x_0$. Given $(\mathcal E,\,\nabla)\,\in\,{\mathcal C}_{\mathrm{dR}}(M)$, 
the monodromy representation
\[
\mathrm{Mon}^{x_0}_\nabla\,:\,\Gamma\,\longrightarrow\, \mathrm{GL}(\mathcal E|_{x_0})
\]
is obtained via parallel transport. Write $E$ for the (geometric) holomorphic vector bundle whose sheaf of sections is $\mathcal{E}$. 
Given a smooth loop $\gamma\,:\,[0,\,1]\,\longrightarrow\, M$ based at $x_0$, let
\[
P_{\gamma}\,:\,\mathcal{E}|_{\gamma(0)}\,\stackrel{\sim}{\longrightarrow}\,\mathcal{E}|_{\gamma(1)}
\]
be the isomorphism of complex vector spaces obtained from parallel transport
$$P_{\gamma}\,:\,E|_{\gamma(0)}
\,\longrightarrow\, E|_{\gamma(1)}$$ via the canonical identification $\mathcal{E}|_{x}
\,\stackrel{\sim}{\longrightarrow}\, E|_{x}$. In other words, 
we extend $\nabla$ to a $C^\infty$ connection, find a $C^\infty$ lift
$\widetilde\gamma\,:\,[0,\,1]\,\longrightarrow\, E$
of $\gamma$ such that $\widetilde{\gamma}(0)\,=\,e$ and $\nabla_{\gamma'}\widetilde\gamma \,=\,0$, and then define 
\[
P_\gamma(e)\,=\,\widetilde{\gamma}(1).
\] 
As the composition $\gamma_1*\gamma_2$ in $\Gamma$ is the path $\gamma_1$ followed by $\gamma_2$, we must write 
\[\mathrm{Mon}_\nabla^{x_0}(\gamma)\,=\,P_{\gamma^{-1}}.\]

\subsection{Parallel transport, monodromy in $\mathcal T(M)$ and iterated integrals}

In this section we explain how to write the monodromy representation in terms of iterated 
integrals. This is explained in \cite[\S~2]{hain86}, and attributed to Chen, but Hain 
adopts different conventions (like letting parallel transport ``act on the right'' and 
working with connections defined by the negative of a form, see (2.1) and line $-11$ on 
p. 614 of \cite{hain86}), so that it is worth explaining it in full here. Let $E$ be a 
vector space of finite dimension and
\[
\omega\,\,\in\,\, H^0(M,\, \Omega^1_M)\otimes \mathrm{End}(E)
\]
a differential form with values on $\mathrm{End}(E)$. 
Write ${\rm d}_{\omega}$ for the connection on the trivial vector bundle $M\times E\,\longrightarrow\, M$ whose associated 1--form is $\omega$. 
By definition, if $s\,=\,(\mathrm{id},\,e)$ is a section on some open set then, for any $C^\infty$ vector field $v$,
\[
({\rm d}_\omega)_v(s)\,\, =\,\, (\mathrm{id} ,\, v(e)+\omega(v)\cdot e).
\]
Given a smooth loop $\gamma\,:\,[0,\,1]\,\longrightarrow\, M$
about $x_0$, a lift $t\,\longmapsto\,(\gamma(t),\,e(t))$ is parallel if and only if
\[
e'(t)+A(t)e(t)\,=\,0,
\]
where $A(t)\,=\,\omega(\gamma'(t))$. It then follows that 
the parallel transport of $(x_0,\,e_0)\,\in\, (M\times E)|_{x_0}$ along $\gamma$, call it 
\[
P_{\gamma}(x_0,\,e_0)
\]
is the value $y(1)$ of a solution to 
\[
\begin{array}{lcl}y'(t)&=&-A(t)y(t)
\\
y(0)&=&e_0.
\end{array}
\]
Solutions to this sort of equation are described concisely using Picard iteration. 

Recall the following fundamental result. (A proof can be adapted from the proofs of the Picard-Lindel\"of Theorem, which is to be found 
in almost all texts on ordinary differential equations, e.g. \cite{coddington61}, Chapter 6, Theorem 4 and Theorem 6.)

\begin{theorem}[Picard-Lindel\"of]\label{24.08.2020--1}Let $R$ be a finite dimensional associative $\mathbb{C}$--algebra and $F\,
:\,[0,\,1]\,\longrightarrow\, R$ be a continuous map. Given $f_0\,\in\, R$, define inductively continuous functions $[0,\,1]
\,\longrightarrow\, R$ by 
\[
f_{k+1}(t)\,=\,f_0+\int_0^tF(s)\cdot f_k(s)\,\mathrm ds.
\]
\begin{enumerate}[(1)]\item The limit $f(t)\,=\,\lim_kf_k(t)$ exists for each $t\,\in\,[0,\,1]$ and $f'\,=\,F \cdot f$. 
\item Let $\delta_k\,=\,f_k-f_{k-1}$, for each $k\,\ge\,1$. Then, $\delta_1(t)\,=\,\int_0^tF(s)\cdot f_0\,\mathrm ds$,
 \[
\delta_{k+1}(t)\,=\,\int_0^tF(s)\delta_k(s)\,\mathrm ds,
\]
for $k\,\ge\,1$
and
\[
f(t)\,\,= \,\,f_0+ \sum_{k=1}^\infty\delta_k(t).
\]
\end{enumerate}
\end{theorem}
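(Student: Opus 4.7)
The plan is to run the classical Picard iteration in the Banach algebra setting and deduce both assertions simultaneously by establishing uniform convergence of the telescoping series $\sum_k\delta_k$.

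First I would fix a submultiplicative norm $\|\cdot\|$ on $R$ (any norm is equivalent since $R$ is finite dimensional, and the existence of a submultiplicative one is standard, for instance by embedding $R$ in an $\mathrm{End}_{\mathbb{C}}(V)$ via the left regular representation and pulling back the operator norm). Since $F$ is continuous on the compact interval $[0,1]$, set $M \,=\, \sup_{s\in[0,1]}\|F(s)\|\,<\,\infty$. This bound is what drives all subsequent estimates.

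Next I would verify the recursion in (2) by direct computation. By definition of $f_{k+1}$ and $f_k$ one has
\[
\delta_{k+1}(t) \,=\, f_{k+1}(t)-f_k(t) \,=\, \int_0^t F(s)\bigl(f_k(s)-f_{k-1}(s)\bigr)\,\mathrm ds \,=\, \int_0^t F(s)\,\delta_k(s)\,\mathrm ds,
\]
for $k\ge 1$, while $\delta_1(t)=\int_0^t F(s)\cdot f_0\,\mathrm ds$ is immediate from $f_1(t)-f_0$. By induction, using submultiplicativity of $\|\cdot\|$, I would establish the estimate
\[
\|\delta_k(t)\|\,\le\,\|f_0\|\,\frac{(Mt)^k}{k!}\qquad\text{for all }t\in[0,1].
\]
The base case $k=1$ is immediate; for the inductive step, $\|\delta_{k+1}(t)\|\le \int_0^t M\cdot\|f_0\|\,(Ms)^k/k!\,\mathrm ds\,=\,\|f_0\|(Mt)^{k+1}/(k+1)!$.

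Once this estimate is in hand, the series $f_0+\sum_{k\ge 1}\delta_k(t)$ is majorised on $[0,1]$ by $\|f_0\|\,e^{M}$ and, more importantly, converges uniformly on $[0,1]$ by comparison with the Weierstrass $M$-test. Since each $\delta_k$ is continuous, the uniform limit
\[
f(t)\,=\,f_0+\sum_{k=1}^\infty\delta_k(t)\,=\,\lim_k f_k(t)
\]
is continuous on $[0,1]$, which gives the convergence statement in (1) and the series expansion in (2). Uniform convergence of $(f_k)$ then lets me pass to the limit under the integral in the defining recursion for $f_{k+1}$ to obtain the integral equation
\[
f(t)\,=\,f_0+\int_0^t F(s)\cdot f(s)\,\mathrm ds.
\]
Differentiating (using continuity of $F$ and of $f$, via the fundamental theorem of calculus) yields $f'(t)\,=\,F(t)\cdot f(t)$, completing (1).

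The only potential subtlety, and the step to be carried out with care, is the non-commutativity of $R$: all multiplications appearing in the recursion and the estimates must be written in the correct order ($F(s)$ on the left of $f_k(s)$ or $\delta_k(s)$), which is why the bound uses submultiplicativity rather than any commutation. There is no deeper obstacle — the argument is essentially a verbatim transcription of the proof of the Picard–Lindelöf theorem from the scalar linear case to the finite-dimensional Banach-algebra setting.
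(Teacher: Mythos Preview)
Your argument is correct and is precisely the standard Picard iteration with the factorial bound $\|\delta_k(t)\|\le\|f_0\|(Mt)^k/k!$ and the Weierstrass $M$-test. The paper itself does not give a proof of this theorem: it merely remarks that a proof ``can be adapted from the proofs of the Picard--Lindel\"of Theorem, which is to be found in almost all texts on ordinary differential equations,'' citing Coddington. Your write-up is exactly such an adaptation, so it matches what the paper intends.
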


The functions $\{\delta_k\}$ appearing in the above statement have a well-known expression in
terms of iterated integrals. Let $R$ be as in Theorem \ref{24.08.2020--1} and consider
$R$--valued $C^\infty$ differential forms $\rho_1,\,\cdots,\,\rho_m$. Given a smooth path $c
\,:\,[0,\,1]\,\longrightarrow\, M$, let $F_i\,=\,\rho_i(c')$ and define 
\[
\underbrace{G_1(t)\,=\,\int_0^tF_1(s)\,\mathrm ds}_{\in R}, \quad G_{i+1}(t)=\underbrace{\int_0^t G_i(s)\cdot F_{i+1}(s)\,\mathrm ds}_{\in R}
\]
and
\[\underbrace{
\int_c\rho_1\cdots\rho_m\,=\,G_m(1)}_{\in R}
\] 
(See \cite[p.~359]{chen71} and \cite[\S~1]{hain86}, among others.)
For example, 
\[
\int_c\rho_1\rho_2\,\,=\,\, \int_0^1\left(\int_0^{s_2}F_1(s_1)\,\mathrm ds_1\right)\cdot F_2(s_2)\,\mathrm ds_2.
\]
Alternatively, it is sometimes convenient to think of the iterated integral as an integral over a simplex:
\[
\int_c\rho_1 \cdots\rho_m\,=\, \int_{0\le s_1\le\cdots\le s_n\le1} F_1(s_1)\cdots F_m(s_m)\,\mathrm ds_1\cdots \mathrm ds_m.
\]

Returning to the problem of expressing the parallel transport of $$(x_0,\,e_0)\,\in\,(M\times 
E)|_{x_0}$$ along the path $\gamma$ with respect to $\mathrm{d}_\omega$, the above terminology 
and Theorem \ref{24.08.2020--1} allow us to write
\[
P_{\gamma}(x_0,\,e_0)\,=\,\left(x_0\,\,,\,\, e_0-\int_0^1 A(s_1)\,\mathrm ds_1\cdot e_0+\int_{0\le s_1\le s_2\le 1} A(s_1) A(s_2) \,\mathrm ds_1 \,\mathrm ds_2\cdot e_0-\cdots\right).
\]
A simple change of variables shows that 
\[
\int_{\gamma^{-1}}\omega_1\cdots\omega_m\,\,=\,\,(-1)^m\int_{\gamma}\omega_m\cdots\omega_1
\]
and hence 
\begin{equation}\label{27.08.2020--1}
\mathrm{Mon}^{x_0}_{{\rm d}_\omega}(\gamma)\,:\,(x_0,\,e_0)\,\longmapsto\,
\left(x_0\,\,,\,\,e_0+\int_{\gamma} \omega\cdot e_0 +\int_{\gamma}\omega^2\cdot e_0+\int_{\gamma }\omega^3\cdot e_0+\cdots\right).
\end{equation}(Here, obviously, $\omega^n$ stands for $\underbrace{\omega\cdots\omega}_{n}$.)

\subsection{Algebraic interlude}\label{20.09.2022--2}

To sustain the arguments concerning the relation between ${\mathfrak A}$ and $\mathbb C\Gamma$ 
necessary in explaining the morphism $\mathbf q$, we shall require some material on 
completions of rings and Hopf duals. We fix a field $K$.

\subsubsection*{Completions} Let $A$ be an associative and unital $K$--algebra. Write 
\[
S_{A}\,=\,
\left\{\begin{array}{c}\text{two sided ideals of} \\ \text{finite codimension in $A$}\end{array}\right\};
\] this set it partially ordered by decreeing $I\,\le\, J$ if $I\,\supset\, J$. 
Consider the subset of $2^A$ defined by 
\[
\{a+I\}_{a\in A,\,I\in S_A}
\]
and let $\mathfrak O$ be the unique topology on $A$ having this set as a
sub-base \cite[p.~47--48]{kelley55}. Of course, $S_A$ is local base at $0$ \cite[p.~50]{kelley55} and all ideals of $S_A$ are open, hence closed. 

Put 
\[
\widehat A\,\,=\,\,\varprojlim_{I\in S_A}A/I; 
\]
endowing each $A/I$ with the \emph{discrete topology} and $\widehat A$ with the projective limit topology, $\widehat A$ becomes a 
topological ring. (Rings as $\widehat A$ are called pseudo-compact \cite[VIIB, \S~0]{SGA3}.) Let $\iota\,:\,A\,\longrightarrow\, \widehat 
A$ be the evident map; according to \cite[III.7.3, Cor.~1]{bourbaki-general-topology}, $\iota$ is a completion of $A$ for $\mathfrak O$.

Given $\mathfrak{a}\,\in\, S_A$, let $p_{\mathfrak a}\,:\,\widehat A\,\longrightarrow\, A/\mathfrak{a}$ be the canonical map and denote by $\widehat {\mathfrak 
a}$ the open and closed ideal ${\rm Ker}(p_{\mathfrak a})$. Note that $A/\mathfrak a\,\stackrel{\sim}{\longrightarrow}\,
\widehat A/\widehat{\mathfrak a}$.

\begin{lemma} The family $\{\widehat{\mathfrak a}\}_{\mathfrak{a}\in S_A}$ is a local base at $0$. The ideal $\widehat{\mathfrak a}$ is the closure of $\iota\mathfrak{a}$ in $\widehat A$.
\end{lemma}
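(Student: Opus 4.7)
My plan is to exploit directly the definition of the projective limit topology on $\widehat A$, in which each factor $A/I$ carries the discrete topology. Because the statement has two parts, I would handle them in order, as the local base claim is needed for the closure claim.

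For the first assertion, I would recall that a sub-base at $0$ in the projective limit $\widehat A = \varprojlim_{I\in S_A} A/I$ is given by the sets $p_{\mathfrak a}^{-1}(V)$ with $V$ open in $A/\mathfrak a$; since $A/\mathfrak a$ is discrete, the only relevant such set is $p_{\mathfrak a}^{-1}(\{0\}) = \widehat{\mathfrak a}$. To upgrade this sub-base to a base, I would check closure under finite intersection: given $\mathfrak a,\mathfrak b\in S_A$, the intersection $\mathfrak a\cap\mathfrak b$ is again two-sided, and is of finite codimension because the induced map $A/(\mathfrak a\cap\mathfrak b)\hookrightarrow A/\mathfrak a\times A/\mathfrak b$ is injective. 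The compatibility of projections in the inverse system then gives $\widehat{\mathfrak a}\cap\widehat{\mathfrak b}=\widehat{\mathfrak a\cap\mathfrak b}$, which finishes this part.

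For the second assertion, one inclusion is essentially formal: $\widehat{\mathfrak a}$ is the kernel of the continuous map $p_{\mathfrak a}$ into the discrete ring $A/\mathfrak a$, hence open and closed, and it contains $\iota(\mathfrak a)$ since $p_{\mathfrak a}\circ\iota$ is the quotient $A\twoheadrightarrow A/\mathfrak a$; therefore $\overline{\iota(\mathfrak a)}\subset\widehat{\mathfrak a}$. For the reverse inclusion, I would fix $x\in\widehat{\mathfrak a}$ and a basic neighbourhood $x+\widehat{\mathfrak b}$ of it, and show it meets $\iota(\mathfrak a)$. After replacing $\mathfrak b$ by $\mathfrak a\cap\mathfrak b$, I may assume $\mathfrak b\subset\mathfrak a$; then the image $p_{\mathfrak b}(x)\in A/\mathfrak b$ is sent to $p_{\mathfrak a}(x)=0$ under the canonical quotient $A/\mathfrak b\twoheadrightarrow A/\mathfrak a$, so it lifts to an element of $\mathfrak a/\mathfrak b$, i.e. there exists $a\in\mathfrak a$ with $p_{\mathfrak b}(x)=a+\mathfrak b$. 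Consequently $\iota(a)-x\in\widehat{\mathfrak b}$, which witnesses $\iota(a)\in(x+\widehat{\mathfrak b})\cap\iota(\mathfrak a)$.

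The whole argument is purely formal and I do not expect a real obstacle; the only minor point of attention is to keep track of the one-step reduction $\mathfrak b\rightsquigarrow\mathfrak a\cap\mathfrak b$, which is what allows the lift of $p_{\mathfrak b}(x)$ to live inside $\mathfrak a$ rather than merely inside $A$. Everything else is a transparent consequence of the fact that each $A/\mathfrak a$ is both discrete and a finite-dimensional $K$-vector space.
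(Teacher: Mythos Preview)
Your proposal is correct and follows essentially the same route as the paper. The paper omits the first assertion and, for the closure, takes an arbitrary neighbourhood of $x$, refines it to a finite intersection $\bigcap_j\widehat{I_j}$, and then sets $J=I_1\cap\cdots\cap I_r\cap\mathfrak a$ to lift $x_J$ into $\mathfrak a$; this is exactly your reduction $\mathfrak b\rightsquigarrow\mathfrak a\cap\mathfrak b$ followed by the same lift, only you have streamlined it by first establishing the local base.
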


\begin{proof} We only prove the second statement. Clearly $\iota(\mathfrak{a})\,\subset\,
\widehat {\mathfrak a}$. Let $$x\,=\,(x_I)\,\in\,\widehat {\mathfrak a}.$$ 
Let $U$ be an open neighbourhood of $x$ and let $I_1,\,\cdots,\,I_r\,\in\, S_A$ be such that
$$V\,=\,\{(y_I)\,\in\,\widehat A\,:\,y_{I_j}\,=\,x_{I_j},\,\,\forall\,j\}$$ is contained in $U$.
Let $J\,=\,I_1\cap\cdots\cap I_r\cap\mathfrak a$; it clearly belongs to $S_A$. Then, if $\xi\,\in\, A$ is such that
$\xi+J\,=\,x_J$, we conclude that $\iota\xi \,\in\, V\subset U$ and $\xi \,\in\,{\mathfrak a}$.
\end{proof}

\begin{example}
Suppose that $K$ is algebraically closed and $A$ is a commutative $K$--algebra of finite type. 
Then $\widehat A$ is $\prod_{\mathfrak m}\widehat{A}_{\mathfrak m}$, where $\mathfrak m$ ranges 
over the maximal ideals.
\end{example}

Let $\modules A$ be the category of $A$--modules which have finite dimension as $K$--spaces. 
Analogously, let $\modulesc{\widehat A}$ be the category of \emph{continuous} $\widehat 
A$--modules, i.e., those (left) $\widehat A$--modules of finite dimension over $K$ which are 
annihilated by some $\widehat {\mathfrak a}$ with $\mathfrak{a}\,\in\, S_A$. (Note that, although 
any $\widehat A$--module of finite dimension is annihilated by an ideal $\mathfrak I\,\in\, 
S_{\widehat A}$, we cannot assure a priori that such an ideal contains a certain $\widehat I$.)
Using the natural morphism $A\,\longrightarrow\, \widehat A$, we obtain a functor
\[
\modulesc {\widehat A} \,\longrightarrow\, \modules A 
\] 
which is easily seen to be an \emph{isomorphism of categories}. Indeed, for each $E\,\in\,
\modules A$, the ideal $\mathfrak{a}\,=\,\mathrm{Ann}(E)$ belongs to $S_A$, and hence we
can define a structure of $\widehat A$--module by
\[
(x_I)\cdot e \,=\, x_{\mathfrak a}\cdot e,\quad (x_I)\in\widehat A,\,\,e\,\in\, E.
\]

\subsubsection*{The restricted dual}

Let $A$ be an associative and unital $K$--algebra and define, as in \cite[Ch. VI]{sweedler69}, 
the restricted dual
\[\begin{split}
A^\circ &\,=\, \varinjlim_{I\in S_A}(A/I)^*
\\&\,=\,\{\alpha\in A^*\,:\,\text{$\alpha$ vanishes on some $I\in S_A$}\}. 
\end{split}\]
This is a co-algebra \cite[6.0.2, p.~113]{sweedler69} and 
\[
\widehat A\,=\,(A^\circ)^*
\]
since dualization takes direct limits into projective limits. Letting $\comodules {A^\circ}$ be 
the category of (right) comodules over $A^\circ$ having finite dimension as $K$--spaces, we have 
functors
\begin{equation}\label{20.09.2022--1}
\sigma_A\,:\,\comodules {A^\circ}\,\longrightarrow\, \modules A\quad\text{and}\quad\tau_A
\,:\,\modules A\,\longrightarrow\, \comodules {A^\circ}
\end{equation}
which together define an {\it isomorphism of categories}, cf. \cite[Theorem 2.1.3]{sweedler69}
\cite[1.6.4, p.~11]{montgomery93}. For future application, note that if $E\,\in\,
\comodules {A^\circ}$ has basis $\{e_i\}_{i=1}^m$ and co-multiplication defined by
$(a^\circ_{ij})\,\in\,\mathrm{Mat}_m((A/I)^*)$, 
then 
\[
a e_j \,\,= \,\,\sum_i a^\circ_{ij}(a)e_i.
\] Conversely, if $F\,\in\,\modules A$ has basis $\{f_i\}_{i=1}^n$, is annihilated by
$\mathfrak a$, and the action is given by 
\[
af_j\,\,=\,\,\sum_i\rho_{ij}(a)f_i,\]
then the co-module structure of $F$ is defined by the matrix $(\rho_{ij})\,\in\,\mathrm{Mat}_n( 
(A/\mathfrak a)^*)$.

Note that $(A^\circ)^*$ is now an algebra \cite[Proposition 1.1.1, p.~9]{sweedler69} which is 
easily seen to be the completion $\widehat A$ studied above.

Let now $B$ be another algebra and 
\[
v\,:\,A^\circ\,\,\longrightarrow\,\, B^\circ
\]
an arrow of co-algebras. Taking duals, we obtain an arrow of $K$--algebras $u\,:\,\widehat B \,
\longrightarrow\, \widehat A $. Alternatively, $u$ can be described as follows. For each $I\,
\in\, S_A$, there exists $J\,\in\, S_B$ such that $v ( (A/I)^*)\,\subset\,(B/J)^*$.
In addition, let $I_v\,\in\, S_B$ be the largest ideal with this property. We arrive at an order
preserving map $(-)_v\,:\,S_A\,\longrightarrow\, S_B$ and to an arrow of $K$--spaces 
\[
u_I\,:\,B/I_v\,\longrightarrow\, A/I
\]
whose transpose is $v|_{(A/I)^*}$. Note that {\it $u_I$ is a morphism of algebras} since
$v|_{(A/I)^*}$ is a morphism of co-algebras. 
A moment's thought shows that for $J\,\subset\, I$ in $S_A$, the diagram
\[
\xymatrix{
B/I_v\ar[r]^{u_I} & A/I
\\
\ar[u]^{{\rm canonic}}B/J_v\ar[r]_{u_J} & A/J\ar[u]_{\rm canonic}
}
\]
commutes and we obtain an arrow of $K$--algebras
\begin{equation}\label{17.09.2022--1}
u\,\,: \,\,\varprojlim_{J\in (S_A)_{v}}B/J\,\longrightarrow\, \varprojlim_{I\in S_A} A/I. 
\end{equation}
For each $I\,\in\, S_A$ (respectively, $J\,\in\, S_B$), endow $A/I$, (respectively, $B/J$) with
the \emph{discrete} topology and give the above projective limits the product topology. This
being so, general topology assures that $u$ is continuous. 

We then obtain from eq. \eqref{17.09.2022--1} a morphism of topological $K$--algebras 
\[
u\,:\,\widehat B \,\longrightarrow \,\widehat A
\]
and consequently a functor 
\[u^\#\,:\,
\modulesc{\widehat A}\,\longrightarrow\,\modulesc{\widehat B}. 
\]
For the sake of communication, the morphism $u$ shall be called the \emph{continuous transpose of $v$}. 
\begin{lemma}\label{20.09.2022--3}The following diagram of categories 
\[
\xymatrix{
\modulesc {\widehat A}\ar[d]_\sim\ar[rr]^{u^\#} && \modulesc{\widehat B} \ar[d]^\sim
\\
\ar[d]_\sim\modules A&& \modules B\ar[d]^\sim
\\
\comodules{A^\circ}\ar[rr]_{v^\#}&&\comodules{B^\circ}
}
\]
is strictly commutative. In particular, given $E\,\in\,\comodules{A^\circ}$, $e\,\in
\,E$ and $b\,\in\,
\widehat B$, we have \begin{equation}\label{23.09.2022--1}b\cdot e \,=\, u(b)\cdot e,
\end{equation} where on the left-hand-side of this equation, $\widehat B$ acts by means of $\modulesc {\widehat B}
\,\stackrel{\sim}{\longrightarrow}\,
\comodules{B^\circ}$, and on the right-hand-side, $u(b)$ it acts by means of $\modulesc{\widehat A}\,\stackrel{\sim}{\longrightarrow}\,
\comodules{A^\circ}$. 
\end{lemma}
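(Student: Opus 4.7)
The plan is to reduce the strict commutativity of the diagram to the single pointwise identity \eqref{23.09.2022--1}. The key observation is that all the vertical equivalences are isomorphisms of categories that act as the identity on underlying $K$--vector spaces and on $K$--linear maps: passing from a comodule to a module via $\sigma$ and then to a continuous pseudo-compact module through the inflation $A\,\longrightarrow\,\widehat A$ merely relabels the action. The functors $u^\#$ and $v^\#$ are, respectively, restriction of scalars along $u$ and corestriction along $v$, and both also preserve underlying vector spaces and morphisms. Consequently, commutativity of the full diagram on morphisms is automatic, and commutativity on objects is exactly \eqref{23.09.2022--1}.

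To establish \eqref{23.09.2022--1}, I would fix $E\,\in\,\comodules{A^\circ}$ with basis $\{e_i\}_{i=1}^m$ and comodule structure encoded in a matrix $(a^\circ_{ij})\,\in\,\mathrm{Mat}_m((A/I)^*)$ for some $I\,\in\, S_A$. Going down the left column, $E$ becomes an $A$--module, annihilated by $I$, with $a e_j \,=\, \sum_i a^\circ_{ij}(a)e_i$, and then a $\widehat A$--module via the inflation $A\,\longrightarrow\,\widehat A\,=\,(A^\circ)^*$; evaluating $\xi\,\in\,\widehat A$ against $a^\circ_{ij}\,\in\,(A/I)^*\,\subset\, A^\circ$ yields $\xi\cdot e_j\,=\,\sum_i\xi(a^\circ_{ij})e_i$, and then restriction of scalars along $u$ produces $b\cdot e_j\,=\,\sum_i u(b)(a^\circ_{ij})e_i$ for $b\,\in\,\widehat B$. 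Going across the bottom row instead, the comodule $v^\#E$ has structure matrix $(v(a^\circ_{ij}))\,\in\,\mathrm{Mat}_m((B/I_v)^*)$, and the same recipe produces the $\widehat B$--action $b\cdot e_j\,=\,\sum_i b(v(a^\circ_{ij}))e_i$.

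The lemma therefore boils down to proving the identity $u(b)(a^\circ)\,=\,b(v(a^\circ))$ for every $b\,\in\,\widehat B$ and every $a^\circ\,\in\,(A/I)^*$, which is nothing but the defining property of $u$ as the continuous transpose of $v$: the map $u_I\,:\,B/I_v\,\longrightarrow\, A/I$ was constructed precisely so that its $K$--linear transpose is $v|_{(A/I)^*}$. I expect no genuine mathematical obstacle here; the only care required is the bookkeeping through the dualities $\widehat A\,=\,(A^\circ)^*\,=\,\varprojlim A/I$ and $\widehat B\,=\,(B^\circ)^*\,=\,\varprojlim B/J$, checking that the pairing used to evaluate $b\,\in\,\widehat B$ on $v(a^\circ)\,\in\,(B/I_v)^*$ on one side agrees with the pairing used to evaluate $u(b)\,\in\,\widehat A$ on $a^\circ\,\in\,(A/I)^*$ on the other. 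Once this matching is written down, the identity and therefore the strict commutativity follow at once.
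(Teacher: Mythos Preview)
Your proposal is correct and follows essentially the same route as the paper: pick a basis, write the comodule structure via a matrix of coefficients $a^\circ_{ij}\in(A/I)^*$, compute the induced $\widehat B$--action along both paths, and reduce everything to the transposition identity $u(b)(a^\circ)=b(v(a^\circ))$, which is exactly how $u$ was defined. The only cosmetic difference is that the paper begins from an object of $\modulesc{\widehat A}$ and descends, whereas you begin from $\comodules{A^\circ}$ and ascend; the computation is the same.
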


\begin{proof}
We require the constructions made in the beginning of the section. Let $E\,\in\, 
\modulesc{\widehat A}$ and pick a basis $\{e_i\}$. Let $\mathfrak a\,\in\, S_A$ be such that 
$\widehat{\mathfrak a}$ annihilates $E$. Now, the coefficients $\{\varphi_{ij}\}$ determining 
the $A^\circ$--comodule structure of $E$ with respect to the basis $\{e_i\}$ are determined by
\begin{equation}\label{24.09.2022--1}
ae_j\,=\,\sum_i\varphi_{ij}(a)\cdot e_i.
\end{equation}
(Note that $\varphi_{ij}\,\in\,(A/\mathfrak a)^*\,\subset\, A^\circ$.) Consequently, the
coefficients of the $B^\circ$--comodule $E$ coming from the composition via $v^\#$ are
$v(\varphi_{ij})$. By construction, $v(\varphi_{ij})\,\in\, (B/\mathfrak a_v)^*$. 

We now examine the structure of $B^\circ$--comodule on $E$ obtained through the composition via
$u^\#$. Clearly, $\widehat{\mathfrak a}_v\,\subset\,\widehat B$ annihilates $E$. For each
$\alpha\,\in\, (A/\mathfrak a)^*$ and $b\,\in\, B/\mathfrak a_v$, we have, as $u_{\mathfrak a}
\,=\,v^*$, that 
\[
\alpha\circ u_{\mathfrak a}(b)\,=\,v(\alpha)(b). 
\]
This shows that $v(\varphi_{ij})\,=\,\varphi_{ij}\circ u_{\mathfrak a}$. By definition of $u^\#$ 
and \eqref{24.09.2022--1} we have $$be_j\,=\,u_{\mathfrak a}(b)e_j\,=\,\sum_i\varphi_{ij}(u_{\mathfrak 
a}(b))e_i$$ and the coefficients of the $B^\circ$--module $E$ are $\varphi_{ij}\circ u_{\mathfrak 
a}$. This shows commutativity of the diagram on the level of object. The case of arrows being 
trivially verified, we conclude our proof.
\end{proof}

\subsection{Fiber functor and the Tannakian picture}\label{20.09.2022--4}

Using the inclusion $\mathcal T(M)\,\longrightarrow \, \mathcal C_{\mathrm{dR}}(M)$ 
and Tannakian theory, we shall construct an arrow of Hopf algebras ${\mathfrak 
A}^\circ\,\longrightarrow \,(\mathbb C\Gamma)^\circ$ and from it define, using Section 
\ref{20.09.2022--2}, an arrow of $\mathbb C$--algebras $u\,:\,{\mathbb C}\Gamma\,\widehat{\,\,}
\,\longrightarrow \, \widehat{\mathfrak A}$. In what follows, let
us denote by an upper hashtag the functor between categories of modules or comodules obtained from
a morphism of algebras or co-algebras. 

Define a tensor functor 
\[
V\,:\,\comodules{{\mathfrak A}^\circ}\,\longrightarrow\,\comodules{(\mathbb{C}\Gamma)^\circ}
\]
by means of a composition as indicated in
\[
\xymatrix{
{\mathcal T}(M)\ar[rr]^\iota&&{\mathcal C}_{\mathrm{dR}}(M)\ar[d]_\sim^{\mathrm{Mon}^{x_0}}
\\
\modules{\mathfrak A}\ar[u]_\sim^{\mathcal{V}} && \modules{\mathbb{C}\Gamma}\ar[d]_\sim^{\tau} 
\\
\comodules{{\mathfrak A}^\circ}\ar[rr]_V\ar[u]_\sim^\sigma&& \comodules{(\mathbb{C}\Gamma)^\circ}. 
}
\]
Here, $\tau$ and $\sigma$ are the equivalences mentioned in \eqref{20.09.2022--1}. 
Letting $\omega_\star$ denote ``the'' forgetful functor, we have a natural tensor isomorphism
\[\eta\,:\,
\omega_{\mathfrak{A}^\circ}\,\stackrel{\sim}{\longrightarrow}\,\omega_{(\mathbb{C}\Gamma)^\circ}\circ V
\]
constructed from the canonical isomorphisms
\[\omega_{\mathfrak A}\,\stackrel{\sim}{\longrightarrow}\, \bullet|_{x_0}\circ\mathcal{V}
\quad\text{ and }\quad
\omega_{\mathbb C\Gamma}\circ\mathrm{Mon}^{x_0}\,\stackrel{\sim}{\longrightarrow}\, \bullet|_{x_0}.
\]
(It should be noted that $\omega_{\mathfrak A}\circ\sigma\,=\,\omega_{\mathfrak{A}^\circ_M}$ and 
$\omega_{(\mathbb C\Gamma)^\circ}\circ\tau\,=\,\omega_{\mathbb C\Gamma}$.) In simpler terms, given 
$E\in\comodules{{\mathfrak A}^\circ}$, the vector space $V(E)$ is $\mathcal 
V(E)|_{x_0}\,=\,(\mathcal O_M\otimes E)|_{x_0}$ and $\eta_E$ is the natural isomorphism $E\,
\stackrel{\sim}{\longrightarrow} \mathcal V(E)|_{x_0}$.

We obtain an arrow of \emph{Hopf algebras}
\[
v\,:\, {\mathfrak A}^\circ\,\longrightarrow\,(\mathbb{C}\Gamma)^\circ
\]
jointly with an isomorphism of tensor functors 
\[
\overline\eta\,:\,v^\#\,\longrightarrow \,V
\] 
determined by $\eta$ \cite[II.2.1.2.1-3, p.116]{Sa}. 
That is, $\eta$ equals the composition
\[
\omega_{{\mathfrak A}^\circ}\,=\,\omega_{(\mathbb{C}\Gamma)^\circ}\circ v^\# \xymatrix{\ar[rr]^-{\omega_{(\mathbb{C}\Gamma)^\circ}\star\overline\eta}&&} \omega_{(\mathbb{C}\Gamma)^\circ}\circ V.
\]
Again, in simpler terms, given $E\in\comodules{\mathfrak{A}^\circ}$, the canonical isomorphism 
of vector spaces $\eta_E\,:\,E\,\stackrel{\sim}{\longrightarrow} \, \mathcal V(E)|_{x_0}$ is an isomorphism of 
$(\mathbb{C}\Gamma)^\circ$--comodules, where $\mathcal V(E)|_{x_0}$ carries the co-action coming 
from the monodromy and $E$ the co-action coming from $v$.

Applying the results of Section \ref{20.09.2022--2} to $v$, we derive a continuous morphism of topological $\mathbb C$--algebras 
\[
u\,\,:\,\,\mathbb{C}\Gamma\,\widehat{\,\,}\,\longrightarrow\, \widehat{\mathfrak{A}}
\]
which makes the analogue diagram in Lemma \ref{20.09.2022--3} commute strictly. We can then say 
the following. Let $E\in\comodules{{\mathfrak A}^\circ}$ be given. Endow $E$, respectively 
$\mathcal{V}(E)|_{x_0}$, with its $\mathbb{C}\Gamma$--module structure coming from $u$, 
respectively from the monodromy. Then $\eta\,:\,E\,\longrightarrow \,\mathcal V(E)|_{x_0}$ is an arrow of $\mathbb 
C\Gamma$--modules.

\subsection{Description of $u:\mathbb C\Gamma\,\widehat{\,\,}\to\widehat{\mathfrak A}$ in
terms of iterated integrals}

We can now explicitly describe the action of $\Gamma$ on ${\mathfrak A}$--modules and hence the 
morphism of $\mathbb C$--algebras $u\,:\,\mathbb C\Gamma\,\widehat{\,\,}
\,\longrightarrow \,\widehat{\mathfrak A}$.

Let $R$ be a finite dimensional associative $\mathbb{C}$--algebra and $f\,:\,{\mathfrak A}
\,\longrightarrow \,R$ a surjective morphism. Regarding $R$ as a left ${\mathfrak A}$--module, let 
$\mathrm d_f$ be the connection on $\mathcal O_M\otimes R$ defining $\mathcal V(R)$
and
\[
\varphi_f\,= \,{\rm d}_f(1\otimes1)\,\in\, H^0(M,\, \Omega_M^1)\otimes R.
\]
 
\begin{proposition}\label{01.09.2020--1}
For any smooth loop $\gamma$ based at $x_0$, the series
\[1+ \int_\gamma\varphi_f +\int_\gamma\varphi_f^2 +\ldots\]
in $R$ converges, with respect to the \emph{Euclidean} topology, to $f(u(\gamma))$.
\end{proposition}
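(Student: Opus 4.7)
The strategy is to combine two ingredients: the explicit expression \eqref{27.08.2020--1} of parallel transport as an iterated-integral series, and the commutative diagram of Section \ref{20.09.2022--4} (an incarnation of Lemma \ref{20.09.2022--3}) identifying the monodromy action of $\gamma$ on $R$ with the action arising from the continuous transpose $u$.

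First, I unwind the construction of $\mathcal V(R)$ for the $\mathfrak A$-module structure on $R$ given by left multiplication after $f$. By \eqref{16.09.2022--1}, the connection form defining $\mathrm d_f$ is an element
\[
\omega_f\,\in\, H^0(M,\,\Omega_M^1)\otimes\mathrm{End}(R)
\]
corresponding to the composition $H^0(M,\,\Omega_M^1)^*\,\hookrightarrow\,\mathfrak{A}\,\stackrel{f}{\longrightarrow}\, R\,\hookrightarrow\,\mathrm{End}(R)$, where the last map is $r\mapsto L_r$ (left multiplication by $r$). Applying $\mathrm d_f$ to the constant section $1\otimes 1$ kills the $\mathrm d$-part and gives
\[
\varphi_f\,=\,\omega_f\cdot 1\,\in\, H^0(M,\,\Omega_M^1)\otimes R,
\]
which is just $\omega_f$ re-read via $R\hookrightarrow\mathrm{End}(R)$. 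Consequently, for each tangent vector $v$, the endomorphism $\omega_f(v)$ equals $L_{\varphi_f(v)}$, and the same holds for products: $\omega_f(v_1)\cdots\omega_f(v_n)\cdot 1\,=\,\varphi_f(v_1)\cdots\varphi_f(v_n)$ computed in $R$.

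Next, I apply formula \eqref{27.08.2020--1} with $\omega\,=\,\omega_f$ and $e_0\,=\,1\in R$: Theorem \ref{24.08.2020--1} assures that the series
\[
1 \,+\, \int_\gamma\omega_f\cdot 1 \,+\, \int_\gamma\omega_f^2\cdot 1 \,+\, \cdots
\]
converges in the Euclidean topology of $R$ to $\mathrm{Mon}^{x_0}_{\mathrm d_f}(\gamma)\cdot 1$. By the preceding paragraph, each iterated-integral term $\int_\gamma \omega_f^n\cdot 1$ coincides with $\int_\gamma\varphi_f^n$, computed in the finite-dimensional algebra $R$. Hence the series in the statement converges and its limit is $\mathrm{Mon}^{x_0}_{\mathrm d_f}(\gamma)\cdot 1$.

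It remains to identify this limit with $f(u(\gamma))$, where $f$ is tacitly extended to a continuous map $\widehat{\mathfrak A}\,\longrightarrow\, R$ (possible since $\ker f$ has finite codimension). By the construction of $V$ in Section \ref{20.09.2022--4} and of the natural isomorphism $\eta$, the $\mathbb C\Gamma$-action on $R\,=\,\mathcal V(R)|_{x_0}$ coming from the monodromy of $\mathrm d_f$ coincides with the $\mathbb C\Gamma$-action on $R$ given by the comodule structure induced by $v$. Lemma \ref{20.09.2022--3}, in the form of equation \eqref{23.09.2022--1}, then identifies this action with that of $\widehat{\mathfrak A}$ via $u$; that is, $\gamma\cdot 1\,=\,u(\gamma)\cdot 1$. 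As the $\mathfrak A$-module structure on $R$ is $a\cdot r\,=\,f(a)\cdot r$, the continuous extension to $\widehat{\mathfrak A}$ yields $u(\gamma)\cdot 1\,=\,f(u(\gamma))\cdot 1\,=\,f(u(\gamma))$, completing the argument. The only subtle point is the bookkeeping between $\varphi_f$ as $R$-valued and $\omega_f$ as $\mathrm{End}(R)$-valued, but this is essentially automatic because $R$ is an algebra acting on itself.
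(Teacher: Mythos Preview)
Your proof is correct and follows essentially the same approach as the paper's own argument: both pass from the $R$-valued form $\varphi_f$ to the $\mathrm{End}(R)$-valued connection form via left multiplication (the paper calls it $\Phi_f=(\mathrm{id}\otimes\lambda)(\varphi_f)$, you call it $\omega_f$), invoke \eqref{27.08.2020--1} to identify the series with the monodromy acting on $1_R$, and then use Lemma \ref{20.09.2022--3}/Section \ref{20.09.2022--4} to identify that action with $f(u(\gamma))$. The only cosmetic difference is that the paper uses injectivity of $\lambda$ to deduce convergence in $R$ from convergence in $\mathrm{End}(R)$, whereas you evaluate at $1_R$ directly; both are fine.
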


\begin{proof} Let $\lambda\,:\,R\,\longrightarrow \,\mathrm{End}(R)$
be the morphism of left multiplication. 
With this notation,
\[\Phi_f\,:=\,( \mathrm{id}\otimes\lambda)(\varphi_f)\,\in\,H^0(M,\, \Omega_M^1) \otimes\mathrm{End}(R)
\]
is the 1--form associated to the connection ${\rm d}_f$ (see \eqref{16.09.2022--1}). 
As seen in \eqref{27.08.2020--1}, the series
\[1+
\sum_{n\ge1} \int_\gamma\Phi_f^n
\]
converges to $\mathrm{Mon}_{\gamma}\,\in\, \mathrm{End}(R)$. (We write $\mathrm{Mon}_{\gamma}$
instead of $\mathrm{Mon}^{x_0}_{\mathrm d_{\Phi_f}}(\gamma)$.) 
Now, for each $n\,\in\,\mathbb{N}$, we have in $\mathrm{End}(R)$ that
\[
\lambda\int_\gamma\varphi_f^n \,=\,\int_\gamma\Phi_f^n,
\] 
so that $1+\int_\gamma\varphi_f +\int_\gamma\varphi_f^2+\ldots$ converges to an
element $m_\gamma$ which satisfies $\lambda (m_\gamma)\,=\,
{\rm Mon}_\gamma$. In particular, $\mathrm{Mon}_\gamma(1_R)\,=\,m_\gamma$. 
But, by construction of $u\,:\,\mathbb{C}\Gamma\,\widehat{\,\,}\,\longrightarrow
\,\widehat{\mathfrak{A}}$ (see
\eqref{23.09.2022--1} and Section \ref{20.09.2022--4}), we know that
\[
\begin{split}
{\rm Mon}_\gamma(1_R)&\,=\,\,f(u(\gamma))\cdot1_R ,
\end{split}
\]
so that $m_\gamma\,=\,f(u(\gamma))$.
\end{proof}

As a consequence, we have the following. Let $\gamma$ be a smooth loop based at $x_0$ and
define, for every $n\,\in\,\mathbb N\setminus\{0\}$, the linear functional
\[J_{\gamma,n}\,:\,H^0(M,\, \Omega^1_N)^{\otimes n}\, \longrightarrow\,\mathbb{C}\]
by 
\[
\varphi_1\otimes\cdots\otimes\varphi_n\,\longmapsto \,\int_\gamma \varphi_1\cdots\varphi_n.
\]
This gives rise to an element of $[H^0(M,\, \Omega^1_M)^*]^{\otimes n}$, then to an element of
${\mathfrak A}$ and hence to an element \[J_{\gamma,n}\,\in\,\widehat{\mathfrak A}.\] For
convenience, we decree that $J_{\gamma,0}\,=\,1$. By fixing a basis $\{\theta_i\}_{i=1}^g$
of $H^0(M,\, \Omega^1_M)$, a dual basis $\{t_i\}_{i=1}^g$ and writing $t_{i_1,\ldots,i_n}$ for
$t_{i_1}\otimes\cdots\otimes t_{i_n}$, we have 
\[
J_{\gamma,n} \,=\, \sum_{1\le i_1,\ldots,i_n\le g}\left( \int_\gamma \theta_{i_1}\cdots\theta_{i_n} \right) \cdot t_{i_1,\ldots,i_n}\quad \begin{array}{c}\text{modulo integrability} \\ \text{relations}.\end{array}
\]
These elements were studied firstly by Chen and Parshin \cite{parshin} and hence we call the arrow 
\[\mathrm{cp}_N\,:\, \Gamma\,\longrightarrow\,\widehat{\mathfrak A},\quad
\gamma\,\longmapsto\,\sum_{n=0}^N J_{\gamma,n}
\]
the Chen-Parshin maps of order $N$ (as of now, these are maps of sets) and $(\mathrm{cp}_N)$ the Chen-Parshin sequence. 

\begin{lemma}\label{17.08.2020--1}Let $R$ be a finite dimensional associative $\mathbb{C}$--algebra
and $f:{\mathfrak A}\,\longrightarrow\, R$ a surjection. Let ${\rm d}_f$ stand for the connection
on ${\mathcal O}_M\otimes R$ associated to the left ${\mathfrak A}$--module structure on $R$, and
let $\varphi_f\in R\otimes H^0(M,\, \Omega_M^1)$ be the 1--form ${\rm d}_f(1\otimes1)$. Then,
for each $n\,\in\,\mathbb{N}$, we have
\[
f( J_{\gamma,n})\,\, =\,\, \int_\gamma\varphi_f^n.
\]
\end{lemma}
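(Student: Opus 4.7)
The plan is to unwind both sides in coordinates using a fixed basis of $H^0(M,\Omega_M^1)$, and then to compare the resulting expansions term-by-term; no abstract argument seems necessary.

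A preliminary point is that since $R$ is finite dimensional and $f$ is surjective, $\mathrm{Ker}(f)$ is an ideal of finite codimension in $\mathfrak A$, hence belongs to $S_{\mathfrak A}$. Consequently $f$ extends uniquely to a continuous $\mathbb C$--algebra morphism $\widehat{\mathfrak A}\longrightarrow R$, still denoted $f$, and this is the map being applied to $J_{\gamma,n}$.

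Now fix a basis $\{\theta_i\}_{i=1}^g$ of $H^0(M,\Omega_M^1)$ together with the dual basis $\{t_i\}$. Tracing through the construction of $\mathrm d_f$ from eq.~\eqref{16.09.2022--1}, the $\mathrm{End}(R)$--valued 1--form associated to $\mathrm d_f$ is $\Phi_f=\sum_{i}\theta_i\otimes\lambda(f(t_i))$, and evaluating the connection $\mathrm d+\Phi_f$ on the global section $1\otimes1$ gives
\[
\varphi_f\,=\,\sum_{i=1}^g\theta_i\otimes f(t_i)\,\in\,H^0(M,\Omega_M^1)\otimes R.
\]
Expanding $\int_\gamma\varphi_f^n$ by multilinearity of the iterated integral in each of the $n$ slots then yields
\[
\int_\gamma\varphi_f^n\,=\,\sum_{i_1,\ldots,i_n}\Big(\int_\gamma\theta_{i_1}\cdots\theta_{i_n}\Big)\cdot f(t_{i_1})\cdots f(t_{i_n}).
\]
Since $f$ is a morphism of algebras and, by definition, $t_{i_1,\ldots,i_n}=t_{i_1}\cdots t_{i_n}$ in $\mathfrak A$, one has $f(t_{i_1})\cdots f(t_{i_n})=f(t_{i_1,\ldots,i_n})$. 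Comparing with the coordinate expression of $J_{\gamma,n}$ recorded immediately above the lemma gives the desired identity $f(J_{\gamma,n})=\int_\gamma\varphi_f^n$.

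The only mildly subtle point is the coordinate identification $\varphi_f=\sum_i\theta_i\otimes f(t_i)$, which rests on carefully matching the abstract construction of $\mathrm d_f$ in eq.~\eqref{16.09.2022--1} against an explicit basis expansion; everything else is routine expansion. In principle, one could try to deduce the lemma from Proposition~\ref{01.09.2020--1} by extracting a ``degree $n$ part'', but $\widehat{\mathfrak A}$ carries no natural grading compatible with $u$, so the direct coordinate computation above is cleaner.
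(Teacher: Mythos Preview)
Your proof is correct and follows essentially the same route as the paper's sketch: write $\varphi_f=\sum_i\theta_i\otimes e_i$ with $e_i=f(t_i)$, expand $\int_\gamma\varphi_f^n$ multilinearly into a sum over multi-indices of $\big(\int_\gamma\theta_{i_1}\cdots\theta_{i_n}\big)\,e_{i_1}\cdots e_{i_n}$, and identify this with $f$ applied to the coordinate expression for $J_{\gamma,n}$. Your preliminary remark about extending $f$ to $\widehat{\mathfrak A}$ is harmless but unnecessary, since $J_{\gamma,n}$ already lies in the image of $\mathfrak A$.
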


\begin{proof}[Sketch proof.] Let $\varphi_f\,=\,\sum_{i=1}^ge_i\otimes\theta_i$ and denote the function $\langle\theta_i,\gamma'\rangle$ by $a_i$. It follows that $\langle \varphi_f,\gamma'\rangle = \sum_ie_ia_i\in C^\infty([0,1], R)$. 
Consequently, 
\[\begin{split}
\int_\gamma\varphi_f^n& = \int_{0\le s_1\le\cdots\le s_n\le1} \left(\sum_{1\le i_1\le g}e_{i_1}a_{i_1}(s_1)\right)\cdots \left(\sum_{1\le i_n\le g}e_{i_1}a_{i_n}(s_n)\right)\, ds_1\cdots ds_n
\\
&=\sum_{1\le i_1,\ldots,i_n\le g} \int_{0\le s_1\le\cdots\le s_n\le1} e_{i_1}\cdots e_{i_n}a_{i_1}(s_1)\cdots a_{i_n}(s_n)\,ds_{1}\ldots ds_n
\\
&=\sum_{1\le i_1,\ldots,i_n\le g}e_{i_1,\ldots,i_n } \int_{\gamma}\theta_{i_1}\cdots\theta_{i_n}.
\end{split}
\]\end{proof}

For each $I\in S_{\mathfrak A}$, endow the complex vector space ${\mathfrak A}/I$ with the {\it canonical Euclidean topology} and give $\widehat{\mathfrak A}$ the projective limit (product) topology. This topology shall be called the PE-topology (projective-Euclidean). 
Hence, a sequence $(\alpha_n)$ in $ \widehat{\mathfrak A}$ converges to $\alpha\in\widehat{\mathfrak A}$ if and only if, for each $I\in S_{\mathfrak A}$, the induced sequence $(\alpha_{n,I})$ in ${\mathfrak A}/I$ converges $\alpha_I$ \cite[4 Theorem, p .91]{kelley55}. 

\begin{corollary}In $\widehat{\mathfrak{A}}$, we have the equality \[\sum_{n=0}^\infty
J_{\gamma,n}\,=\,u(\gamma),
\] 
where convergence is taken in the PE-topology. In other words, $u(\gamma)$ is the limit of the Chen-Parshin sequence $(\mathrm{cp}_N(\gamma))$
in the PE topology.
\end{corollary}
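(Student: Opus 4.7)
The plan is to reduce the convergence statement from the completion $\widehat{\mathfrak A}$ to each of its finite-dimensional quotients, where the convergence is already handled by Proposition \ref{01.09.2020--1}. By definition, the PE-topology on $\widehat{\mathfrak A}=\varprojlim_{I\in S_{\mathfrak A}}{\mathfrak A}/I$ is the projective limit of the Euclidean topologies on each finite-dimensional quotient ${\mathfrak A}/I$. Consequently, a sequence $(\alpha_N)$ in $\widehat{\mathfrak A}$ converges to $\alpha$ if and only if, for every $I\in S_{\mathfrak A}$, the images in ${\mathfrak A}/I$ converge in the usual (Euclidean) sense. This is precisely the statement recalled just before the corollary.

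First I would fix an arbitrary $I\in S_{\mathfrak A}$ and let $f:{\mathfrak A}\longrightarrow R:={\mathfrak A}/I$ be the canonical projection; this is a surjection onto a finite-dimensional associative $\mathbb C$--algebra, so Proposition \ref{01.09.2020--1} applies. Then the series
\[
1+\int_\gamma\varphi_f+\int_\gamma\varphi_f^{2}+\cdots
\]
converges in the Euclidean topology of $R$ to $f(u(\gamma))$, where $\varphi_f={\rm d}_f(1\otimes 1)\in H^0(M,\Omega_M^1)\otimes R$ is the one--form attached to the left ${\mathfrak A}$--module structure on $R$.

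Next I would invoke Lemma \ref{17.08.2020--1}, which identifies the $n$th term of this series with the image under $f$ of the iterated integral element $J_{\gamma,n}\in\widehat{\mathfrak A}$, i.e.\ $f(J_{\gamma,n})=\int_\gamma\varphi_f^{n}$. Since $f$ factors through the continuous projection $\widehat{\mathfrak A}\longrightarrow {\mathfrak A}/I$, applying this projection to the Chen--Parshin partial sum $\mathrm{cp}_N(\gamma)=\sum_{n=0}^{N}J_{\gamma,n}$ yields exactly the $N$th partial sum of the series above in $R$. By Proposition \ref{01.09.2020--1}, this partial sum converges to $f(u(\gamma))$, which is the image of $u(\gamma)$ in ${\mathfrak A}/I$.

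Since $I\in S_{\mathfrak A}$ was arbitrary, the projections of $\mathrm{cp}_N(\gamma)$ into every finite-dimensional quotient converge to the corresponding projection of $u(\gamma)$. By the characterisation of the PE-topology as the projective limit topology, this means $\mathrm{cp}_N(\gamma)\longrightarrow u(\gamma)$ in $\widehat{\mathfrak A}$, which is the asserted equality. The main (and only real) subtlety is ensuring that the identification of coefficients in Lemma \ref{17.08.2020--1} is compatible with the canonical map ${\mathfrak A}\longrightarrow\widehat{\mathfrak A}\longrightarrow{\mathfrak A}/I$; this is automatic from the construction of $J_{\gamma,n}$ as the image in $\widehat{\mathfrak A}$ of the element of ${\mathfrak A}$ determined by the linear functional $\varphi_1\otimes\cdots\otimes\varphi_n\longmapsto\int_\gamma \varphi_1\cdots\varphi_n$ on $H^0(M,\Omega_M^1)^{\otimes n}$, so no additional work is needed.
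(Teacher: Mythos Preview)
Your argument is correct and follows essentially the same route as the paper's own proof: fix $I\in S_{\mathfrak A}$, apply Lemma \ref{17.08.2020--1} to identify $f(J_{\gamma,n})$ with $\int_\gamma\varphi_f^n$, and then use Proposition \ref{01.09.2020--1} together with the definition of the PE-topology to conclude. The only difference is that you spell out a little more explicitly the compatibility of the projections, which the paper leaves implicit.
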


\begin{proof}Let $I\in S_{\mathfrak A}$ and denote by $f\,:\,{\mathfrak A}\,
\longrightarrow\,R$ the quotient morphism. As in Lemma \ref{17.08.2020--1}, we
write $\varphi_f\,=\,\mathrm d_f(1\otimes1)$. Then,
\[
f\left(\sum_{n=0}^NJ_{\gamma,n}\right)\,\,=\,\,\sum_{n=0}^N\int_\gamma\varphi_f^n
\] according to Lemma \ref{17.08.2020--1}. Since the sequence $N\,\longmapsto\,
\sum_{n=0}^N\int_\gamma\varphi_f^n$ converges (Proposition \ref{01.09.2020--1}),
the definition of the product topology proves that $\sum_{n\ge0}\,J_{\gamma,n}$ converges in $\widehat{\mathfrak{A}}$. The limit is $u(\gamma)$. 
\end{proof}


\begin{thebibliography}{000}
\bibitem{ABDH} M. Aprodu, I. Biswas, S. Dumitrescu and S. Heller, On the monodromy map for logarithmic systems, to be published 
{\it Bul. Soc. Math. Fr} {\bf 150} (2022), 543-568.

\bibitem{At} M. F. Atiyah, Complex analytic connections in fibre bundles,
{\it Trans. Amer. Math. Soc.} {\bf 85} (1957), 181--207.

\bibitem{biswas97} I. Biswas, Parabolic bundles as orbifold bundles, {\it Duke Math. Jour.} {\bf 88} (1997), 305--326.

\bibitem{biswas-hai-dos_santos22} I. Biswas, P. H. Hai and J. P. dos Santos, Connections on Trivial Vector Bundles over 
Projective Schemes. Preprint October 2021.

\bibitem{Bi} I. Biswas, \'Etale triviality of finite vector bundles over compact complex manifolds,
{\it Adv. Math.} {\bf 369} (2020), 107167.

\bibitem{BDDH} I. Biswas, J. P. dos Santos, S. Dumitrescu and S. Heller, On certain Tannakian categories
of integrable connections over K\"ahler manifolds, {\it Canad. J. Math.} {\bf 74} (2022), 1034--1061.

\bibitem{BD} I. Biswas and S. Dumitrescu, Holomorphic bundles trivializable by proper surjective
holomorphic map, {\it Int. Math. Res. Notices} {\bf 5} (2022), 3636--3650. 

\bibitem{BD2} I. Biswas and S. Dumitrescu, Riemann-Hilbert correspondence for 
differential systems over Riemann surfaces, {\em Publ. Res. Inst. Math. Sci.}
{\bf 59} (2023).

\bibitem{BDH} I. Biswas, S. Dumitrescu and S. Heller, Irreducible flat ${\rm 
SL}(2,\mathbb R)$--connections on the trivial holomorphic bundle, {\em J. Math. Pures 
Appl.} {\bf 149} (2021), 28--46.

\bibitem{BDHH} I. Biswas, S. Dumitrescu, L. Heller and S. Heller, Fuchsian $sl(2,\mathbb C)$--systems of compact Riemann surfaces 
(with an appendix by Takuro Mochizuki), {\em arxiv.org/abs/2104.04818.}

\bibitem{BDHH2} I. Biswas, S. Dumitrescu, L. Heller and S. Heller, On the existence of holomorphic curves in compact quotients of 
$\text{SL}(2, \mathbb C)$, {\em arXiv.org/abs/2112.03131.}

\bibitem{BdS}I. Biswas and J. P. dos Santos, Vector bundles trivialized by proper morphisms and the fundamental group scheme,
{\it Jour. Inst. Math. Jussieu} {\bf 10} (2011), 225--234.

\bibitem{Bo} A. Borel, P.-P. Grivel, B. Kaup, A. Haefliger, B. Malgrange and F. Ehlers,
{\rm Algebraic $D$--modules}, Perspectives in Mathematics, 2. Academic Press, Inc., Boston, MA, 1987.

\bibitem{bourbaki-general-topology} N. Bourbaki, {\it Elements of Mathematics} (General Topology. Chapters 1--4. Springer. 1995).

\bibitem{chen71}K.-T. Chen, Algebras of Iterated Path Integrals and the Fundamental Group, {\it Trans. Amer. Math. Soc.}
{\bf 156} (1971), 359--379.

\bibitem{CDHL} G. Calsamiglia, B. Deroin, V. Heu and F. Loray, The Riemann-Hilbert mapping for 
$sl(2)$--systems over genus two curves, {\em Bull. Soc. Math. France} {\bf 147} (2019), 159--195.

\bibitem{coddington61}E. Coddington, {\it An introduction to ordinary differential equations} (Prentice-Hall Mathematics Series 
Prentice-Hall, 1961).

\bibitem{DMOS} P. Deligne, J. S. Milne, A. Ogus and K.-y. Shih,
{\it Hodge cycles, motives, and Shimura varieties},
Lecture Notes in Mathematics, 900, Springer-Verlag, Berlin-New York, 1982.

\bibitem{Gh} E. Ghys, D\'eformations des structures complexes sur les espaces homog\`enes de 
${\rm SL}(2, \mathbb{C})$, {\em Jour. Reine Angew. Math.} \textbf{468} (1995), 113--138.

\bibitem{hain86} 
R. M. Hain, On a generalization of Hilbert's 21st problem, {\it Ann. Sci. \'Ecole Norm. Sup.} {\bf 19} (1986), 609--627. 

\bibitem{HHSch} L. Heller, S. Heller and N. Schmitt, Navigating the space of symmetric cmc surfaces, {\em Jour. Diff.
Geom.} {\bf 110} (2018), 413--455.

\bibitem{hochschild-mostow57} G. Hochschild and G. D. Mostow, Pro-affine
algebraic groups, {\it Amer. Jour. Math.} {\bf 91} (1969), 1141--1151.

\bibitem{HM} A. H. Huckleberry and G. A. Margulis, Invariant Analytic hypersurfaces, 
{\em Invent. Math.} {\bf 71} (1983), 235--240.

\bibitem{HW} A. H. Huckleberry and J. Winkelmann, Subvarieties of parallelizable manifolds, {\em Math. Ann.} {\bf 295} (1993), 469--483.

\bibitem{Ka} N. M. Katz, An overview of Deligne's work on Hilbert's twenty-first problem,
{\it Mathematical developments arising from Hilbert problems}
(Proc. Sympos. Pure Math., Vol. XXVIII, Northern Illinois Univ., 
De Kalb, Ill., 1974), pp. 537--557. Amer. Math. Soc., Providence, R. I., 1976.

\bibitem{kelley55} Kelley, {\it General Topology} (Graduate Texts in Mathematics, No. 27. Springer-Verlag, 1975).

\bibitem{La} M. Lackenby, Some 3-manifolds and 3-orbifolds with large fundamental group,
{\it Proc. Amer. Math. Soc.} {\bf 135} (2007), 3393--3402. 

\bibitem{Loray16} F. Loray, Isomonodromic deformations of Lame connections, the Painleve 
VI equation and Okamoto symmetry, {\em Izv. Ros. Acad. Nauk. Ser. Math} 80(1) (2016), 
119--176.

\bibitem{montgomery93}S. Montgomery, {\it Hopf Algebras and Their Actions on Rings}. CBMS Reg. Conf. Ser.
Math. 82. Amer. Math. Soc., Providence, RI.

\bibitem{No1} M. V. Nori, On the representations of the fundamental group,
\textit{Compositio Math.} {\bf 33} (1976), 29--41.

\bibitem{No2} M. V. Nori, The fundamental group--scheme, \textit{Proc. Ind. Acad. Sci.
(Math. Sci.)} \textbf{91} (1982), 73--122.

\bibitem{parshin}A. N. Parshin, {\it A generalization of the Jacobian variety}. Amer. Math. Soc. Transl.
Second Series 84 (1969), 187--196.

\bibitem{Ra} M. S. Raghunathan, Vanishing theorems for cohomology groups associated to discrete subgroups of semi-simple Lie 
groups, {\it Osaka J. Math.} {\bf 3} (1966), 243--256, corrections ibid. {\bf 16} (1979), 295--299.

\bibitem{Raj} C. S. Rajan, Deformations of complex structures on $\Gamma \backslash \text{SL}(2, \mathbb C$ and cohomology of local 
systems, {\it Proc. Indian Acad. Sci. (Math. Sci.)} {\bf 104} (1994), 389--395.

\bibitem{Sa} N. Saavedra Rivano, {\it Cat\'egories Tannakiennes}
(Lecture Notes in Mathematics, Vol. 265, Springer-Verlag, Berlin-New York, 1972).

\bibitem{SGA3} Sch\'emas en groupes (SGA 3). Tome I. {\it Propri\'et\'es g\'en\'erales des sch\'emas en groupes.} Doc. Math. (Paris), 
7. Soc. Math. France, Paris, 2011.

\bibitem{Sik} A. S. Sikora, Character varieties, {\it Trans. Amer. Math. Soc.}
{\bf 364} (2012), 173--208.

\bibitem{Si} C. T. Simpson, Higgs bundles and local systems,
{\it Inst. Hautes \'Etudes Sci. Publ. Math.} \textbf{75} (1992), 5--95.

\bibitem{sweedler69} M. Sweedler, {\it Hopf algebras} (New York: W.A. Benjamin, Inc., 1969).

\bibitem{Wa} H.-C. Wang, Complex Parallelisable manifolds, {\em Proc. 
Amer. Math. Soc.} \textbf{5} (1954), 771--776.

\end{thebibliography}
\end{document}